\newtheorem{theorem}{Theorem}[section]
\newtheorem{lemma}[theorem]{Lemma}
\newtheorem{corollary}[theorem]{Corollary}
\theoremstyle{definition}
\newtheorem{example}[theorem]{Example}
\begin{document}

\title[Weitzenb\"ock derivations of free metabelian Lie algebras]
{Weitzenb\"ock derivations\\
of free metabelian Lie algebras}

\author[Rumen Dangovski, Vesselin Drensky, {\c S}ehmus F{\i}nd{\i}k]
{Rumen Dangovski, Vesselin Drensky, and {\c S}ehmus F{\i}nd{\i}k}
\address{Sofia High School of Mathematics,
61, Iskar Str.,
1000 Sofia, Bulgaria}
\email{dangovski@gmail.com}
\address{Institute of Mathematics and Informatics,
Bulgarian Academy of Sciences,
1113 Sofia, Bulgaria}
\email{drensky@math.bas.bg}
\address{Department of Mathematics,
\c{C}ukurova University, 01330 Balcal\i,
 Adana, Turkey}
\email{sfindik@cu.edu.tr}

\thanks
{The research of the first named author was a part of his project
in the frames of the High School Student Institute at the Institute of Mathematics and Informatics
of the Bulgarian Academy of Sciences}
\thanks
{The research of the second named author was partially supported
by Grant Ukraine 01/0007 of the Bulgarian Science Fund for Bilateral Scientific Cooperation between Bulgaria and Ukraine}
\thanks
{The research of the third named author was partially supported by the
 Council of Higher Education (Y\"OK) in Turkey}

\subjclass[2010]{17B01; 17B30; 17B40; 13N15; 13A50.}
\keywords{Free metabelian Lie algebras; algebras of constants; Weitzenb\"ock derivations.}

\begin{abstract}
A nonzero locally nilpotent linear derivation $\delta$ of the polynomial algebra
$K[X_d]=K[x_1,\ldots,x_d]$ in several variables over a field $K$ of characteristic 0
is called a Weitzenb\"ock derivation.
The classical theorem of Weitzenb\"ock states that the algebra of constants $K[X_d]^{\delta}$
(which coincides with the algebra of invariants of a single unipotent transformation)
is finitely generated. Similarly one may consider the algebra of constants
of a locally nilpotent linear derivation $\delta$ of a finitely generated
(not necessarily commutative or associative) algebra
which is relatively free in a variety of algebras over $K$.
Now the algebra of constants is usually not finitely generated.
Except for some trivial cases this holds for the algebra of constants $(L_d/L_d'')^{\delta}$
of the free metabelian Lie algebra $L_d/L_d''$ with $d$ generators.
We show that the vector space of the constants $(L_d'/L_d'')^{\delta}$ in the commutator ideal $L_d'/L_d''$
is a finitely generated $K[X_d]^{\delta}$-module.
For small $d$, we calculate the Hilbert series of $(L_d/L_d'')^{\delta}$
and find the generators of the $K[X_d]^{\delta}$-module $(L_d'/L_d'')^{\delta}$.
This gives also an (infinite) set of generators of the algebra
$(L_d/L_d'')^{\delta}$.
\end{abstract}

\maketitle

\section{Introduction}

A linear operator $\delta$ of an algebra $R$ over a field $K$ is a derivation if
$\delta(uv)=\delta(u)v+u\delta(v)$ for every $u,v\in R$. In this paper the base field $K$
will be of characteristic 0. We fix also an integer $d\geq 2$ and a set of variables
$X_d=\{x_1,\ldots,x_d\}$. Let $K[X_d]=K[x_1,\ldots,x_d]$ be the polynomial algebra
in $d$ variables. Every mapping $\delta: X_d\to K[X_d]$ can be extended in a unique way
to a derivation of $K[X_d]$ which we shall denote by the same symbol $\delta$.
In our considerations $\delta$ will act as a nonzero nilpotent linear operator of the vector space
$KX_d$ with basis $X_d$. Such derivations are called Weitzenb\"ock.
The Jordan normal form $J(\delta)$ of the matrix of $\delta$
\[
J(\delta)=\left(\begin{matrix}
J_1&0&\cdots&0\\
0&J_2&\cdots&0\\
\vdots&\vdots&\ddots&\vdots\\
0&0&\cdots&J_s
\end{matrix}\right)
\]
consists of Jordan cells with zero diagonals
\[
J_i=\left(\begin{matrix}
0&1&0&\cdots&0&0\\
0&0&1&\cdots&0&0\\
0&0&0&\cdots&0&0\\
\vdots&\vdots&\vdots&\ddots&\vdots&\vdots\\
0&0&0&\cdots&0&1\\
0&0&0&\cdots&0&0
\end{matrix}\right),\quad i=1,\ldots,s.
\]
Hence for each dimension $d$ there is only a finite number of
essentially different Weitzenb\"ock derivations.
Up to a linear change of the coordinates, the Weitzenb\"ock derivations $\delta$
are in a one-to-one correspondence with the partition
$(p_1+1,\ldots,p_s+1)$ of $d$, where $p_1\geq \cdots\geq p_s\geq 0$,
$(p_1+1)+\cdots+(p_s+1)=d$, and the correspondence is given
in terms of the size $(p_i+1)\times (p_i+1)$ of the Jordan cells $J_i$ of $J(\delta)$, $i=1,\ldots,s$.
We shall denote the derivation corresponding to this partition by $\delta(p_1,\ldots,p_s)$.

Clearly, any Weitzenb\"ock derivation $\delta$ is locally nilpotent, i.e., for any $u\in K[X_d]$
there exists an $n$ such that $\delta^n(u)=0$. The linear operator
\[
\exp\delta=1+\frac{\delta}{1!}+\frac{\delta^2}{2!}+\cdots
\]
acting on $KX_d$ is unipotent. It is well known that the algebra of constants of $\delta$
\[
K[X_d]^{\delta}=\ker{\delta}=\{u\in K[X_d]\mid \delta(u)=0\}
\]
coincides with the algebra of invariants of $\exp\delta$
\[
K[X_d]^{\exp\delta}=\{u\in K[X_d]\mid \exp\delta(u)=u\},
\]
and the latter coincides also with the algebra of invariants $K[X_d]^{UT_2(K)}$ of the unitriangular group
$UT_2(K)=\{\exp(\alpha\delta)\mid\alpha\in K\}$. This allows to study the algebra $K[X_d]^{\delta}$
with methods of classical invariant theory.

The classical theorem of Weitzenb\"ock \cite{W} states that for any Weitzenb\"ock derivation $\delta$
the algebra of constants $K[X_d]^{\delta}$ is finitely generated. See the book by Nowicki \cite{N}
for more information on Weitzenb\"ock derivations, including references and examples of explicit sets of
generators of the algebra $K[X_d]^{\delta}$ for concrete $\delta$. For computational aspects see also
the books by Derksen and Kemper \cite{DK} and Sturmfels \cite{St}.

The polynomial algebra $K[X_d]$ is free in the class of all commutative algebras. Similarly, we may consider
the relatively free algebra $F_d({\mathfrak V})$ in a variety $\mathfrak V$ of (not necessarily associative) algebras.
For a background on varieties of associative and Lie algebras see, respectively, the books by
Drensky \cite{D1} and Bahturin \cite{B}.
As in the polynomial case, if $F_d({\mathfrak V})$ is freely generated by the set $X_d$, then every map
$X_d\to F_d({\mathfrak V})$ can be extended to a derivation of $F_d({\mathfrak V})$. Again, we shall call
the derivations $\delta$ which act as nilpotent linear operators of the vector space
$KX_d$ Weitzenb\"ock derivations and shall denote them in the same way $\delta(p_1,\ldots,p_s)$
as in the polynomial case.

Drensky and Gupta \cite{DG} studied Weitzenb\"ock derivations $\delta$ acting on relatively free associative and Lie algebras.
In particular, if the polynomial identities of the variety $\mathfrak V$ of associative algebras follow from the identity
$[x_1,x_2][x_3,x_4]=0$ (which is equivalent to the condition that $\mathfrak V$ contains the algebra $U_2(K)$
of $2\times 2$ upper triangular matrices), then the algebra of constants $F_d({\mathfrak V})^{\delta}$ is not finitely generated.
If $U_2(K)$ does not belong to $\mathfrak V$ (which implies that $\mathfrak V$ satisfies some Engel identity
$[x_2,\underbrace{x_1,\ldots,x_1}_{n\text{ times}}]=x_2\text{ad}^nx_1=0$), a result of Drensky \cite{D2}
gives that the algebra $F_d({\mathfrak V})^{\delta}$ is finitely generated.

Although not finitely generated in the general case, the (associative) algebra $F_d({\mathfrak V})^{\delta}$ has some features typical for finitely
generated (commutative) algebras. In particular, the Hilbert (or Poincar\'e) series of $F_d({\mathfrak V})^{\delta}$ is a rational function.
This follows from results of Belov \cite{Bel} and Berele \cite{B1, B2} combined with ideas of classical invariant theory,
see Drensky and Genov \cite{DGe} and Benenati, Boumova, Drensky, Genov, and Koev \cite{BBD}. Hence, it is interesting to know how far
from finitely generated is the algebra $F_d({\mathfrak V})^{\delta}$.

We consider this problem for free metabelian Lie algebras.
Let $L_d$ be the free Lie algebra with $X_d$ as a set of free generators.
The variety ${\mathfrak A}^2$ of metabelian (solvable of length 2) Lie algebras
is defined by the polynomial identity $[[x_1,x_2],[x_3,x_4]]=0$. The free metabelian algebra $F_d({\mathfrak A}^2)$
is the relatively free algebra in ${\mathfrak A}^2$ and is isomorphic to the factor algebra $L_d/L_d''=L_d/[[L_d,L_d],[L_d,L_d]]$.
We denote its free generators with the same symbols $x_j$ as the generators of $L_d$ and $K[X_d]$.
By Drensky and Gupta \cite{DG}, if $\delta$ is a Weitzenb\"ock derivation of $L_d/L_d''$, then the algebra of constants
$(L_d/L_d'')^{\delta}$ is finitely generated if and only if the Jordan normal form of $\delta$ consists of one Jordan cell
of size $2\times 2$ and $d-2$ Jordan cells of size $1\times 1$, i.e., when the rank of the matrix of $\delta$ is equal to 1.

The commutator ideal $L_d'/L_d''$ of the algebra $L_d/L_d''$ has a natural structure of a $K[X_d]$-module. Our first result is that
its vector subspace $(L_d'/L_d'')^{\delta}$ is a finitely generated $K[X_d]^{\delta}$-module. Freely speaking, this means that
the algebra of constants $(L_d/L_d'')^{\delta}$ is very close to finitely generated.

Then, using the methods of \cite{BBD} we give an algorithm how to calculate the Hilbert series of $(L_d/L_d'')^{\delta}$
and calculate it for small $d$.

Let the Jordan form of $\delta$ contain a $1\times 1$ cell. Then we may assume that $\delta$ acts as a nilpotent linear operator
on $KX_{d-1}$ and $\delta(x_d)=0$. It is well known that in the commutative case $K[X_d]^{\delta}=(K[X_{d-1}]^{\delta})[x_d]$
and this reduces the study of the algebra $K[X_d]^{\delta}$ to the algebra of constants $K[X_{d-1}]^{\delta}$ in the
polynomial algebra in $d-1$ variables. Using the methods of \cite{BBD} again,
we establish a similar result for the algebra of constants $(L_d/L_d'')^{\delta}$.
The result is more complicated than in the polynomial case but we give an algorithm which expresses the generators of the
$K[X_d]^{\delta}$-module $(L_d'/L_d'')^{\delta}$ in terms of the generators of the $K[X_{d-1}]^{\delta}$-module
$(L_{d-1}'/L_{d-1}'')^{\delta}$ and the generators of the algebra $K[X_{d-1}]^{\delta}$.

Finally, we find the generators of the $K[X_d]^{\delta}$-module $(L_d'/L_d'')^{\delta}$  for $d\leq 4$ and for $d=6$, $\delta=\delta(1,1,1)$.
This gives also an explicit (infinite) set of generators of the algebra $(L_d/L_d'')^{\delta}$.

\section{Finite generation}

We assume that all Lie commutators are left normed, e.g.,
\[
[x_1,x_2,x_3]=[[x_1,x_2],x_3]=[x_1,x_2]\text{ad}x_3.
\]
It is well known, see \cite{B}, that the metabelian identity implies the identity
\[
[x_{j_1},x_{j_2},x_{j_{\sigma(3)}},\ldots,x_{j_{\sigma(k)}}]
=[x_{j_1},x_{j_2},x_{j_3},\ldots,x_{j_k}],
\]
where $\sigma$ is an arbitrary permutation of $3,\ldots,k$ and that $L_d'/L_d''$ has a basis consisting of all
\[
[x_{j_1},x_{j_2},x_{j_3},\ldots,x_{j_k}],\quad 1\leq j_i\leq d,\quad j_1>j_2\leq j_3\leq\cdots\leq j_k.
\]
Hence the polynomial algebra $K[X_d]$ acts on $L_d'/L_d''$ by the rule
\[
uf(x_1,\ldots,x_d)=uf(\text{ad}x_1,\ldots,\text{ad}x_d),\quad u\in L_d'/L_d'', f(X_d)\in K[X_d].
\]

Recall the construction of abelian wreath products due to Shmel'kin \cite{Sh}.
Let $A_d$ and $B_d$ be the abelian
Lie algebras with bases $\{a_1,\ldots,a_d\}$ and
$\{b_1,\ldots,b_d\}$, respectively. Let $C_d$ be the free right
$K[X_d]$-module with free generators $a_1,\ldots,a_d$.
We give it the structure of a Lie algebra with trivial multiplication.
The abelian wreath product
$A_d\text{\rm wr}B_d$ is equal to the semidirect sum $C_d\leftthreetimes B_d$. The elements
of $A_d\text{\rm wr}B_d$ are of the form
$\displaystyle\sum_{j=1}^da_jf_j(X_d)+\sum_{j=1}^d\beta_jb_j$, where
$f_j$ are polynomials in $K[X_d]$ and $\beta_j\in K$.
The multiplication in $A_d\text{\rm wr}B_d$ is defined by
\[
[C_d,C_d]=[B_d,B_d]=0,
\]
\[
[a_jf_j(X_d),b_i]=a_jf_j(X_d)x_i,\quad i,j=1,\ldots,d.
\]
Hence $A_d\text{\rm wr}B_d$ is a metabelian Lie algebra and every mapping $X_d\to A_d\text{\rm wr}B_d$
can be extended to a homomorphism $L_d/L_d''\to A_d\text{\rm wr}B_d$.
As a special case of the embedding theorem of Shmel'kin \cite{Sh},
the homomorphism $\varepsilon:L_d/L_d''\to A_d\text{\rm wr}B_d$ defined by
$\varepsilon(x_j)=a_j+b_j$, $j=1,\ldots,d$, is a monomorphism. If
\[
u=\sum[x_i,x_j]f_{ij}(X_d),\quad f_{ij}(X_d)\in K[X_d],
\]
then
\[
\varepsilon(u)=\sum(a_ix_j-a_jx_i)f_{ij}(X_d).
\]
An element $\displaystyle \sum_{i=1}^da_if_i(X_d)\in A_d\text{\rm wr}B_d$
is an image of an element from the commutator ideal $L_d'/L_d''$ if and only if
$\displaystyle \sum_{i=1}^dx_if_i(X_d)=0$.

If $\delta$ is a Weitzenb\"ock derivation of $L_d/L_d''$, we define an action of $\delta$ on
$A_d\text{\rm wr}B_d$ assuming that
\[
\delta(a_j)=\sum_{i=1}^d\alpha_{ij}a_i,\quad \delta(b_j)=\sum_{i=1}^d\alpha_{ij}b_i,
\quad j=1,\ldots,d,
\]
where $\alpha_{ij}\in K$, $i,j=1,\ldots,d$, and
\[
\delta(x_j)=\sum_{i=1}^d\alpha_{ij}x_i,\quad j=1,\ldots,d.
\]
Obviously, the vector space $C_d^{\delta}$ of the constants of $\delta$ in the free $K[X_d]$-module $C_d$
is a $K[X_d]^{\delta}$-module.
The following lemma is a partial case of \cite[Proposition 3]{D2}.
\begin{lemma}\label{finite generation of module}
The vector space $C_d^{\delta}$ is a finitely generated $K[X_d]^{\delta}$-module.
\end{lemma}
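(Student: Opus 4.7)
The plan is to reduce Lemma \ref{finite generation of module} to the classical theorem of Weitzenb\"ock for polynomial algebras by replacing the free module $C_d$ with a polynomial ring in $2d$ variables. I would introduce a second copy $Y_d=\{y_1,\ldots,y_d\}$ of variables and form $K[X_d,Y_d]$, extending $\delta$ by $\delta(y_j)=\sum_i\alpha_{ij}y_i$. The matrix of $\delta$ on $KX_d\oplus KY_d$ is then block-diagonal, consisting of two copies of the nilpotent Jordan form of $\delta$. Hence the extended derivation is again a nonzero locally nilpotent linear derivation of $K[X_d,Y_d]$, i.e., a Weitzenb\"ock derivation, and Weitzenb\"ock's theorem gives that $R:=K[X_d,Y_d]^{\delta}$ is a finitely generated $K$-algebra.

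Next I would exploit the evident $K[X_d]$-module isomorphism $C_d\to K[X_d]y_1+\cdots+K[X_d]y_d$ sending $a_j\mapsto y_j$. Because $\delta$ acts by the same matrix on the $a_j$ and on the $y_j$, this isomorphism is $\delta$-equivariant and intertwines the two $K[X_d]^{\delta}$-module structures. Thus $C_d^{\delta}$ is identified with the $Y$-linear part of $R$. Since $\delta$ preserves the $\mathbb{Z}_{\ge 0}$-grading of $K[X_d,Y_d]$ by total $Y$-degree, this grading descends to $R=R_0\oplus R_1\oplus R_2\oplus\cdots$ with $R_0=K[X_d]^{\delta}$ and $R_1\cong C_d^{\delta}$ as $K[X_d]^{\delta}$-modules.

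It now suffices to prove that $R_1$ is a finitely generated $R_0$-module. For this I would pick a finite set $g_1,\ldots,g_n$ of $K$-algebra generators of $R$, chosen $Y$-homogeneous of degrees $e_1,\ldots,e_n\ge 0$ (possible since the grading is $\delta$-stable). Any element of $R_1$ is a $K$-linear combination of monomials $g_1^{a_1}\cdots g_n^{a_n}$ with $\sum a_ie_i=1$. Since the $a_i$ and $e_i$ are non-negative integers, such a monomial factors as $g_i\cdot h$ with $e_i=1$ and $h$ a monomial in the $g_j$ with $e_j=0$; in particular $h\in R_0$. Therefore the (finitely many) $g_i$ with $e_i=1$ generate $R_1$ as an $R_0$-module, which is the desired conclusion.

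The argument has no genuine obstacle: the only points requiring verification are that the extension of $\delta$ to $K[X_d,Y_d]$ is indeed Weitzenb\"ock and that the identification $C_d\cong\bigoplus_jK[X_d]y_j$ is $\delta$-equivariant, both of which are immediate from the definitions. The key conceptual step is the passage from modules to algebras via the auxiliary variables $Y_d$, after which the conclusion is a clean combinatorial consequence of choosing $Y$-homogeneous generators.
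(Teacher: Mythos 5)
Your argument is correct and complete. Note, however, that the paper does not actually prove this lemma: it simply observes that the statement is a special case of Proposition 3 of the cited paper of Drensky (reference [D2] in the bibliography), so there is no in-text proof to compare against. What you have written is a clean, self-contained derivation of exactly the special case needed here, and it follows the natural route: doubling the variables turns the free $K[X_d]$-module $C_d$ into the $Y$-degree-one component of $K[X_d,Y_d]$, the extended linear derivation is still nilpotent on $KX_d\oplus KY_d$ and hence Weitzenb\"ock, and the classical finite generation of $R=K[X_d,Y_d]^{\delta}$ plus the $Y$-grading (which $\delta$ preserves, so that $R=\bigoplus R_n$ with $R_0=K[X_d]^{\delta}$ and $R_1\cong C_d^{\delta}$) yields finite generation of $R_1$ over $R_0$ via the degree count $\sum a_ie_i=1$. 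The two small points you flag as needing verification are indeed the only ones: the generators of $R$ may be taken $Y$-homogeneous because the kernel of a grading-preserving operator is a graded subspace, and the identification $a_j\mapsto y_j$ is $\delta$-equivariant because $\delta$ acts by the same matrix on both copies and by the Leibniz rule on products. Your write-up could serve as a proof of the lemma for a reader who does not wish to consult [D2]; it gains self-containedness at the cost of not covering the more general setting (arbitrary finitely generated $K[X_d]$-modules with compatible $\delta$-action) treated in the cited proposition.
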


Clearly, if $u\in L_d/L_d''$, then $\varepsilon(\delta(u))=\delta(\varepsilon(u))$.
To simplify the notation we shall omit $\varepsilon$ and shall think that
$L_d/L_d''$ is a subalgebra of $A_d\text{\rm wr}B_d$. Since the action of $K[X_d]$ on
$L_d'/L_d''$ agrees with its action on $C_d$, we shall also think that
$L_d'/L_d''$ is a $K[X_d]$-submodule of $C_d$.

\begin{theorem}\label{theorem for finite generation}
Let $\delta$ be a Weitzenb\"ock derivation of the free metabelian Lie algebra $L_d/L_d''$. Then
the vector space $(L_d'/L_d'')^{\delta}$ of the constants of $\delta$ in the commutator
ideal $L_d'/L_d''$ of $L_d/L_d''$ is a finitely generated $K[X_d]^{\delta}$-module.
\end{theorem}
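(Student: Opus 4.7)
The plan is to exhibit $(L_d'/L_d'')^{\delta}$ as a submodule of the Noetherian $K[X_d]^{\delta}$-module $C_d^{\delta}$, so that finite generation drops out for free from Lemma~\ref{finite generation of module} together with Weitzenb\"ock's theorem applied to $K[X_d]$.

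The first step is to interpret $L_d'/L_d''$, via the embedding $\varepsilon$, as the kernel of the $K[X_d]$-linear evaluation map
\[
\varphi: C_d \to K[X_d], \qquad \sum_{i=1}^d a_i f_i(X_d) \longmapsto \sum_{i=1}^d x_i f_i(X_d),
\]
which is precisely the description quoted just before Lemma~\ref{finite generation of module}. The crucial observation is that $\varphi$ is $\delta$-equivariant: because $\delta(a_j)=\sum_i\alpha_{ij}a_i$ and $\delta(x_j)=\sum_i\alpha_{ij}x_i$ are governed by the same matrix $(\alpha_{ij})$, and because $\delta$ acts as a derivation of the $K[X_d]$-module structure on $C_d$ (an immediate consequence of $\delta$ being a Lie derivation of $A_d\text{wr}B_d$ together with $a_i x_j=[a_i,b_j]$), one checks on generators that $\varphi\circ\delta=\delta\circ\varphi$ on all of $C_d$. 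Taking $\delta$-constants therefore yields
\[
(L_d'/L_d'')^{\delta} \;=\; \ker\varphi \cap C_d^{\delta},
\]
which is a $K[X_d]^{\delta}$-submodule of $C_d^{\delta}$.

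The second step is a Noetherian closing argument. By Weitzenb\"ock's classical theorem the algebra $K[X_d]^{\delta}$ is a finitely generated commutative $K$-algebra, hence a Noetherian ring. Lemma~\ref{finite generation of module} tells us that $C_d^{\delta}$ is a finitely generated module over this Noetherian ring, so $C_d^{\delta}$ is a Noetherian $K[X_d]^{\delta}$-module and every submodule of it is again finitely generated. Applying this to the submodule identified in the previous step yields finite generation of $(L_d'/L_d'')^{\delta}$, which is exactly Theorem~\ref{theorem for finite generation}.

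I do not expect a real obstacle here; the only point that genuinely requires checking is the $\delta$-equivariance of $\varphi$, and this is essentially a tautology once one notes that one and the same matrix $(\alpha_{ij})$ dictates the action of $\delta$ on the $a_j$, on the $b_j$, and on the $x_j$. Everything else is transport of structure plus the Hilbert-basis style fact that submodules of finitely generated modules over a Noetherian ring are finitely generated.
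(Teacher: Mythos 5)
Your proposal is correct and follows essentially the same route as the paper: both identify $(L_d'/L_d'')^{\delta}$ as a $K[X_d]^{\delta}$-submodule of $C_d^{\delta}$ and conclude from Lemma~\ref{finite generation of module} plus the Noetherianity of the finitely generated algebra $K[X_d]^{\delta}$. The paper leaves the $\delta$-equivariance and submodule identification implicit (having set it up in the preceding discussion), whereas you spell it out via the evaluation map $\varphi$; this is a presentational difference only.
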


\begin{proof}
By Lemma \ref{finite generation of module} the $K[X_d]^{\delta}$-module $C_d^{\delta}$ is finitely generated.
Since the algebra $K[X_d]^{\delta}$ is also finitely generated,
all $K[X_d]^{\delta}$-submodules of $C_d^{\delta}$, including $(L_d'/L_d'')^{\delta}$, are also finitely generated.
\end{proof}

\section{Hilbert series}\label{section Hilbert series}

Since the base field $K$ is of characteristic 0, the relatively free algebra $F_d=F_d({\mathfrak V})$ of the variety $\mathfrak V$
of (not necessarily associative or Lie) algebras is a graded vector space. If $F_d^{(n)}$ is the homogeneous component of degree $n$
of $F_d$, then the Hilbert series of $F_d$ is the formal power series
\[
H(F_d,z)=\sum_{n\geq 0}\dim F_d^{(n)}z^n.
\]
The algebra $F_d$ is also multigraded, with a ${\mathbb Z}^d$-grading which counts the degree of each variable $x_j$ in the monomials
in $F_d$. If $F_d^{(n_1,\ldots,n_d)}$ is the multihomogeneous component of degree $(n_1,\ldots,n_d)$, then the corresponding Hilbert series
of $F_d$ is
\[
H(F_d,z_1,\ldots,z_d)=\sum_{n_j\geq 0}\dim F_d^{(n_1,\ldots,n_d)}z_1^{n_1}\cdots z_d^{n_d}.
\]
Similarly, if $\delta$ is a Weitzenb\"ock derivation of $F_d$, the algebra of constants $F_d^{\delta}$ is graded and its Hilbert series is
\[
H(F_d^{\delta},z)=\sum_{n\geq 0}\dim (F_d^{\delta})^{(n)}z^n.
\]
As in the commutative case the algebra of constants $F_d^{\delta}$ coincides with the algebra $F_d^{UT_2(K)}$ of $UT_2(K)$-invariants,
where the action of $UT_2(K)$ on $F_d$ is defined by its realization as $UT_2(K)=\{\exp(\alpha\delta)\mid\alpha\in K\}$.
There is an analogue of the integral Molien-Weyl formula due to Almkvist, Dicks and Formanek \cite{ADF}
which allows to calculate the Hilbert series of $F_d^{UT_2(K)}$ evaluating a multiple integral,
if we know the Hilbert series $H(F_d,z_1,\ldots,z_d)$ of $F_d$.
For varieties of associative algebras and for the variety of metabelian Lie algebras the Hilbert series of $F_d$ is a rational function
in $d$ variables. Then the integral can be evaluated using the Residue Theorem, see the book \cite{DK} for details.
Instead, in \cite{DGe} and \cite{BBD} another approach was suggested. It combines ideas of De Concini, Eisenbud, and Procesi \cite{DEP},
Berele \cite{B1, B2}, and classical results of Elliott \cite{E} and MacMahon \cite{Mc}. We give a short summary of the method.
For details we refer to \cite{BBD}.

We assume that $X_d$ is a Jordan basis of the vector space $KX_d$ for the Weitzenb\"ock derivation $\delta=\delta(p_1,\ldots,p_s)$ of $F_d$.
First we define an action of the general linear group $GL_2(K)$ on $F_d$. Let
$Y_i=\{x_j,x_{j+1},\ldots,x_{j+p_i}\}$ be the part of the basis $X_d$ corresponding to the $i$-th Jordan cell of $\delta$.
We identify the vector space $KY_i$ with the vector space of the binary forms (homogeneous polynomials in two commuting variables
$y_{i1}$ and $y_{i2}$) of degree $p_i$. We assume that $GL_2(K)$ acts on the two-dimensional vector space with basis $\{y_{i1},y_{i2}\}$
and extend its action diagonally on the polynomial algebra $K[y_{i1},y_{i2}]$.
We want to synchronize the actions on $KY_i$ of $UT_2=\{\exp(\alpha\delta)\mid\alpha\in K\}$
and of $UT_2(K)$ as a subgroup of $GL_2(K)$. For this purpose we define an action of the derivation $\delta$
on $K[y_{i1},y_{i2}]$ by $\delta(y_{i1})=0$, $\delta(y_{i2})=y_{i1}$. Then we identify
$x_{j+i_p}$ with $y_{i2}^{p_i}$ and $x_{j+i_p-k}=\delta^k(x_{j+i_p})$ with
$\delta^k(y_{i2}^{p_i})=p_i(p_i-1)\cdots (p_i-k+1)y_{i1}^ky_{i2}^{p_i-k}$, $k=1,\ldots,p_i$.
In this way the vector space $KX_d$ has a structure of a $GL_2(K)$-module and we extend diagonally the action of $GL_2(K)$
on the whole $F_d$. The basis $X_d$ consists of eigenvectors of the diagonal subgroup of $GL_2(K)$. If
$g=\xi_1e_{11}+\xi_2e_{22}$, $\xi_1,\xi_2\in K^{\ast}$, is a diagonal matrix, then
$g(x_{j+k})=\xi_1^{p_i-k}\xi_2^k$, $k=0,1,\ldots,p_i$. This defines a bigrading on $F_d$ assuming that the bidegree of $x_{j+k}$ is $(p_i-k,k)$.
Now $F_d$ is a direct sum of irreducible polynomial $GL_2(K)$-submodules. The irreducible polynomial $GL_2(K)$-modules are indexed
by partitions $\lambda=(\lambda_1,\lambda_2)$. If $W=W(\lambda)$ is an irreducible component of $F_d$, it contains a unique
(up to a multiplicative constant) nonzero element $w$ of bidegree $(\lambda_1,\lambda_2)$. It is invariant under the action
of $UT_2(K)$ and by \cite{DEP} the algebra of $UT_2(K)$-invariants $F_d^{UT_2(K)}=F_d^{\delta}$ is spanned by these vectors $w$.
We express the Hilbert series of $F_d$ as a bigraded vector space. For this purpose we replace in the Hilbert series
$H(F_d,z_1,\ldots,z_d)$ the variables $z_j,z_{j+1},\ldots,z_{j+p_i-1},z_{j+p_i}$ corresponding
to each set $Y_i=\{x_j,x_{j+1},\ldots,x_{j+p_i-1},x_{j+p_i}\}$
by $t_1^{p_i}z,t_1^{p_i-1}t_2z,\ldots,t_1t_2^{p_i-1}z,t_2^{p_i}z$, respectively, and obtain the Hilbert series
\[
H_{GL_2}(F_d,t_1,t_2,z)=H(F_d,t_1^{p_1}z,t_1^{p_1-1}t_2z,\ldots,t_2^{p_1}z,\ldots,t_1^{p_s}z,t_1^{p_s-1}t_2z,\ldots,t_2^{p_s}z).
\]
The variable $z$ gives the total degree and $t_1,t_2$ count the bidegree
induced by the action of the diagonal subgroup of $GL_2(K)$:
The coefficient of $t_1^{n_1}t_2^{n_2}z^n$ in $H_{GL_2}(F_d,t_1,t_2,z)$ is equal to the dimension of the elements of $F_d$ which are
linear combinations of products of length $n$ in the variables $X_d$ and are of bidegree $(n_1,n_2)$.
The Hilbert series is an infinite linear combination with nonnegative integer coefficients of Schur functions
\[
H_{GL_2}(F_d,t_1,t_2,z)=\sum_{n\geq 0}\sum_{(\lambda_1,\lambda_2)}m(\lambda_1,\lambda_2,n)S_{(\lambda_1,\lambda_2)}(t_1,t_2)z^n
\]
and, by the representation theory of $GL_2(K)$, the multiplicity $m(\lambda_1,\lambda_2,n)$ is equal to the multiplicity of the irreducible
$GL_2(K)$-module $W(\lambda_1,\lambda_2)$ in the homogeneous component $F_d^{(n)}$ of total degree $n$ of $F_d$. Hence the bigraded Hilbert series
of the algebra $F_d^{UT_2(K)}$ of $UT_2(K)$-invariants is
\[
H_{GL_2}(F_d^{UT_2(K)},t_1,t_2,z)=\sum_{n\geq 0}\sum_{(\lambda_1,\lambda_2)}m(\lambda_1,\lambda_2,n)t_1^{\lambda_1}t_2^{\lambda_2}z^n
\]
which is the so called multiplicity series of $H_{GL_2}(F_d,t_1,t_2,z)$ considered as a symmetric function in the
variables $t_1,t_2$. In order to obtain the Hilbert series of $F_d^{UT_2(K)}$ as a $\mathbb Z$-graded vector space, it is sufficient
to replace $t_1$ and $t_2$ with 1:
\[
H(F_d^{UT_2(K)},z)=\sum_{n\geq 0}(F_d^{UT_2(K)})^{(n)}z^n=H_{GL_2}(F_d^{UT_2(K)},1,1,z).
\]
To determine the multiplicity series of $H_{GL_2}(F_d,t_1,t_2,z)$ we follow the receipt of \cite{B1, DGe, BBD}. We consider the
function
\[
f(t_1,t_2,z)=(t_1-t_2)H_{GL_2}(F_d,t_1,t_2,z)
\]
which is skewsymmetric in $t_1$ and $t_2$ and consider the Laurent series
\[
f(t_1\xi,t_2/\xi,z)=\sum_{k=-\infty}^{+\infty}f_k(t_1,t_2,z)\xi^k.
\]
Then
\[
H_{GL_2}(F_d^{UT_2(K)},t_1,t_2,z)=\frac{1}{t_1}\sum_{k\geq 0}f_k(t_1,t_2,z).
\]
By the theorem of Belov \cite{Bel} the Hilbert series of the relatively free associative algebra $F_d$ is a rational function
with denominator which is a product of factors of the form $1-z_1^{q_1}\cdots z_d^{q_d}$. Berele \cite{B1, B2} calls such rational
functions nice and proves that the multiplicity series of a nice rational symmetric function is nice again.
The Hilbert series of the free metabelian Lie algebra $L_d/L_d''$ is also nice, see below.
By \cite{BBD}, when $H_{GL_2}(F_d,t_1,t_2,z)$ is a nice rational function, its multiplicity series (which is equal to
$H_{GL_2}(F_d^{UT_2(K)},t_1,t_2,z)$) can be evaluated by the method of Elliott \cite{E} and its further development by McMahon \cite{Mc},
the so called partition analysis or $\Omega$-calculus. In \cite{BBD} an improvement of the $\Omega$-calculus is used, in the spirit
of the algorithm of Xin \cite{X} which involves partial fractions and allows to perform
computations with standard functions of Maple on a usual personal computer.

The next fact is well known, see, e.g., \cite{D0}.

\begin{lemma}\label{Hilbert series of free metabelian algebra}
The Hilbert series of the free metabelian Lie algebra $L_d/L_d''$ is
\[
H(L_d/L_d'',z_1,\ldots,z_d)=1+(z_1+\cdots+z_d)+(z_1+\cdots+z_d-1)\prod_{j=1}^d\frac{1}{1-z_j}.
\]
\end{lemma}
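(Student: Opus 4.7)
The plan is to split $L_d/L_d''$ as a multigraded vector space into its linear part $KX_d$ and its commutator ideal $L_d'/L_d''$. The linear part contributes $z_1+\cdots+z_d$ to the Hilbert series, so the remaining task is to compute $H(L_d'/L_d'',z_1,\ldots,z_d)$.

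For that piece I would exploit Shmel'kin's embedding, as recalled in the previous section. Under $\varepsilon$ the ideal $L_d'/L_d''$ is identified with the subspace of $C_d=\bigoplus_{i=1}^d a_iK[X_d]$ consisting of sums $\sum a_if_i(X_d)$ satisfying $\sum x_if_i(X_d)=0$, i.e.\ with the kernel of the $K[X_d]$-linear map $\phi\colon C_d\to K[X_d]$ sending $a_i\mapsto x_i$. The image of $\phi$ is precisely the augmentation ideal $I=(x_1,\ldots,x_d)\subset K[X_d]$, so one obtains the short exact sequence of multigraded vector spaces
\[
0\longrightarrow L_d'/L_d''\longrightarrow C_d\stackrel{\phi}{\longrightarrow}I\longrightarrow 0,
\]
where the multigrading on $C_d$ is fixed by declaring $a_i$ to have multidegree $e_i$. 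This is compatible with the natural multigrading on $L_d'/L_d''$, since $\varepsilon([x_i,x_j])=a_ix_j-a_jx_i$ has multidegree $e_i+e_j$, and the map $\phi$ also preserves multidegree.

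Additivity of Hilbert series on this short exact sequence then yields
\[
H(L_d'/L_d'',z_1,\ldots,z_d)=H(C_d,z_1,\ldots,z_d)-H(I,z_1,\ldots,z_d).
\]
Since $C_d$ is free of rank $d$ over $K[X_d]$ with generators $a_i$ of multidegree $e_i$, I get $H(C_d,z_1,\ldots,z_d)=(z_1+\cdots+z_d)\prod_{j=1}^d(1-z_j)^{-1}$, and clearly $H(I,z_1,\ldots,z_d)=\prod_{j=1}^d(1-z_j)^{-1}-1$. Substituting, adding the linear contribution $H(KX_d,z_1,\ldots,z_d)=z_1+\cdots+z_d$, and collecting terms delivers the stated formula. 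There is essentially no obstacle to this argument: the only nontrivial input is the kernel description of $\varepsilon(L_d'/L_d'')$ inside $C_d$, and this has already been recorded in the discussion of Shmel'kin's embedding immediately preceding the lemma.
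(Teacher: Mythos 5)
Your argument is correct. Note, however, that the paper itself gives no proof of this lemma: it is stated as ``well known'' with a citation to \cite{D0}, so there is no internal proof to compare against. What you supply is a clean, self-contained derivation from the material the paper has already set up in Section 2. The key step --- identifying $\varepsilon(L_d'/L_d'')$ with the kernel of the $K[X_d]$-linear, multidegree-preserving map $\phi\colon C_d\to K[X_d]$, $a_i\mapsto x_i$, whose image is the augmentation ideal --- is exactly the fact recorded after the description of Shmel'kin's embedding, and the additivity of multigraded Hilbert series over the resulting short exact sequence immediately gives $H(L_d'/L_d'')=(z_1+\cdots+z_d)\prod_j(1-z_j)^{-1}-\bigl(\prod_j(1-z_j)^{-1}-1\bigr)$, after which adding the linear term yields the stated formula. (You do implicitly use that $\varepsilon$ is injective on $L_d'/L_d''$, which is Shmel'kin's embedding theorem as quoted in the paper, so this is fine.) An alternative, more pedestrian route would be to count directly the basis $[x_{j_1},x_{j_2},\ldots,x_{j_k}]$ with $j_1>j_2\leq j_3\leq\cdots\leq j_k$ listed in Section 2; your wreath-product argument avoids that combinatorial bookkeeping and is the more conceptual of the two.
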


Now we shall give the Hilbert series of the subalgebras of constants of Weitzenb\"ock derivations of free metabelian Lie algebras
with small number of generators. In some of the cases we give both Hilbert series, as graded and bigraded vector spaces,
because we shall use the results in the last section of our paper. We do not give results for derivations with a one-dimensional
Jordan cell because we shall handle them in the next section.

\begin{example}\label{Hilbert series for small d}
Let $\delta=\delta(p_1,\ldots,p_s)$ be the Weitzenb\"ock derivation of the free metabelian Lie algebra $L_d/L_d''$
which has Jordan cells of size $p_1+1,\ldots,p_s+1$. Then the Hilbert series of the algebra of constants $(L_d/L_d'')^{\delta}$
are:

\noindent $d=2$, $\delta=\delta(1)$:
\[
H_{GL_2}((L_2/L_2'')^{\delta},t_1,t_2,z)=t_1z+\frac{t_1t_2z^2}{1-t_1z},\quad H((L_2/L_2'')^{\delta},z)=z+\frac{z^2}{1-z};
\]

\noindent $d=3$, $\delta=\delta(2)$:
\[
H_{GL_2}((L_3/L_3'')^{\delta},t_1,t_2,z)=t_1^2z+\frac{t_1^3t_2z^2}{(1-t_1^2z)(1-t_1t_2z)},
\]
\[
H((L_3/L_3'')^{\delta},z)=z+\frac{z^2}{(1-z)^2};
\]
\noindent $d=4$, $\delta=\delta(3)$:
\[
H_{GL_2}((L_4/L_4'')^{\delta},t_1,t_2,z)
=t_1^3z+\frac{t_1^3t_2z^2(t_1^2+t_2^2+t_1^4t_2^4z^2+t_1^5t_2^6z^3-t_1^8t_2^6z^4)}{(1-t_1^3z)(1-t_1^2t_2z)(1-t_1^6t_2^6z^4)},
\]
\[
H((L_4/L_4'')^{\delta},z)=z+\frac{z^2(2+z^2+z^3-z^4)}{(1-z)^2(1-z^4)};
\]

\noindent $d=4$, $\delta=\delta(1,1)$:
\[
H_{GL_2}((L_4/L_4'')^{\delta},t_1,t_2,z)
=2t_1z+\frac{t_1z^2(t_1+3t_2-t_1^2t_2z^2)}{(1-t_1z)^2(1-t_1t_2z^2)},
\]
\[
H((L_4/L_4'')^{\delta},z)=2z+\frac{z^2(4-z^2)}{(1-z)^2(1-z^2)};
\]

\noindent $d=5$, $\delta=\delta(4)$:
\[
H((L_5/L_5'')^{\delta},z)=z+\frac{z^2(2+2z+z^2-2z^4+z^5)}{(1-z)^2(1-z^2)(1-z^3)};
\]

\noindent $d=5$, $\delta=\delta(2,1)$:
\[
H((L_5/L_5'')^{\delta},z)=2z+\frac{z^2(4+2z^2-3z^3+z^4)}{(1-z)^3(1-z^3)};
\]

\noindent $d=6$, $\delta=\delta(5)$:
\[
H((L_6/L_6'')^{\delta},z)=z+\frac{p(z)}{(1-z)^2(1-z^4)(1-z^6)(1-z^8)},
\]
\[
p(z)=z^2(3+3z+7z^2+10z^3+11z^4+14z^5+13z^6+16z^7+12z^8
\]
\[
+8z^9+10z^{10}+3z^{11}+5z^{12}-z^{13}+z^{14}-z^{16}+2z^{17}-z^{18});
\]

\noindent $d=6$, $\delta=\delta(3,1)$:
\[
H((L_6/L_6'')^{\delta},z)=2z+\frac{z^2(5+6z+8z^2+11z^3+5z^4-2z^5+3z^6-2z^7+2z^9-z^{10})}{(1-z)^2(1-z^2)(1-z^4)^2};
\]

\noindent $d=6$, $\delta=\delta(2,2)$:
\[
H((L_6/L_6'')^{\delta},z)=2z+\frac{z^2(5+8z-6z^3+2z^4+2z^5-z^6)}{(1-z)^2(1-z^2)^3};
\]

\noindent $d=6$, $\delta=\delta(1,1,1)$:
\[
H_{GL_2}((L_6/L_6'')^{\delta},t_1,t_2,z)=3t_1z+\frac{t_1z^2p(t_1,t_2,z)}{(1-t_1z)^3(1-t_1t_2z^2)^3},
\]
\[
p=3(t_1+2t_2)+t_1(-t_1+t_2)z-9t_1^2t_2z^2
+3t_1^2t_2(-3t_2+t_1)z^3
\]
\[
+t_1^2t_2^2(9t_1-t_2)z^4+3t_1^3t_2^2(t_2-t_1)z^5-3t_1^4t_2^3z^6+t_1^5t_2^3z^7),
\]
\[
H((L_6/L_6'')^{\delta},z)=3z+\frac{z^2(9+9z-6z^3+2z^4+2z^5-z^6)}{(1-z)^2(1-z^2)^3}.
\]
\end{example}

\section{Derivations with one-dimensional Jordan cell}

In this section we assume that the Jordan form of $\delta$ contains a $1\times 1$ cell,
$\delta$ acts as a nilpotent linear operator on $KX_{d-1}$, and $\delta(x_d)=0$.
We fix a finite system $\{f_1(X_{d-1}),\ldots,f_l(X_{d-1})\}$ of generators of the algebra
of constants $K[X_{d-1}]^{\delta}$. Without loss of generality we may assume that
the polynomials $f_r(X_{d-1})$ are homogeneous, $r=1,\ldots,l$. Also, we fix a system
$\{c_1,\ldots,c_k\}$ of generators of the $K[X_{d-1}]^{\delta}$-module
$(L_{d-1}'/L_{d-1}'')^{\delta}$. Our purpose is to find a generating set of the
$K[X_d]^{\delta}$-module $(L_d'/L_d'')^{\delta}$.

\begin{lemma}\label{reduction of Hilbert series}
The Hilbert series of $(L_d'/L_d'')^{\delta}$, $(L_{d-1}'/L_{d-1}'')^{\delta}$,
and $K[X_{d-1}]^{\delta}$ are related by
\[
H_{GL_2}((L_d'/L_d'')^{\delta},t_1,t_2,z)=\frac{1}{1-z}H_{GL_2}((L_{d-1}'/L_{d-1}'')^{\delta},t_1,t_2,z)
\]
\[
+\frac{z}{1-z}(H_{GL_2}(K[X_{d-1}]^{\delta},t_1,t_2,z)-1),
\]
\[
H((L_d'/L_d'')^{\delta},z)=\frac{1}{1-z}(H((L_{d-1}'/L_{d-1}'')^{\delta},z)+z(H(K[X_{d-1}]^{\delta},z)-1)).
\]
\end{lemma}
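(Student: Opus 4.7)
The plan is to split $L_d'/L_d''$ according to whether the leading index of a basis commutator equals $d$, exploiting that $\delta(x_d)=0$ makes the module action of $x_d$ commute with $\delta$. Using the basis of $L_d'/L_d''$ recalled in Section 2 together with the permutation identity on the tail $x_{j_3},\ldots,x_{j_k}$, every basis element can be rewritten uniquely in exactly one of the forms
\[
u\cdot x_d^n,\quad u\text{ a basis element of }L_{d-1}'/L_{d-1}'',\ n\geq 0,
\]
\[
[x_d,x_{j_2},\ldots,x_{j_m}]\cdot x_d^n,\quad 1\leq j_2\leq\cdots\leq j_m\leq d-1,\ m\geq 2,\ n\geq 0,
\]
where $x_d^n$ acts via $(\mathrm{ad}\,x_d)^n$. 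The first family spans a subspace canonically isomorphic to $(L_{d-1}'/L_{d-1}'')\otimes_K K[x_d]$, and the second spans $V\otimes_K K[x_d]$, where $V$ denotes the span of the commutators $[x_d,x_{j_2},\ldots,x_{j_m}]$ with $m\geq 2$ and $1\leq j_2\leq\cdots\leq j_m\leq d-1$.

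Next I would introduce the $K$-linear map $\phi:V\to K[X_{d-1}]^{+}$ (the augmentation ideal of $K[X_{d-1}]$) defined by $[x_d,x_{j_2},\ldots,x_{j_m}]\mapsto x_{j_2}\cdots x_{j_m}$. This is a $\delta$-equivariant bijection: since $\delta$ is a derivation with $\delta(x_d)=0$, applying $\delta$ to $[x_d,x_{j_2},\ldots,x_{j_m}]$ hits only the tail, and after returning the result to normal form via the permutation identity one recovers precisely the image of $\delta(x_{j_2}\cdots x_{j_m})$ under $\phi$. The map $\phi$ preserves $GL_2$-bidegree, because $x_d$ has bidegree $(0,0)$ as the one-dimensional Jordan cell of $\delta$, and it shifts total degree by $-1$.

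Because $\delta(x_d)=0$, the decomposition above is $\delta$-stable and the operation of taking constants commutes with the tensor factor $K[x_d]$, whose Hilbert series is $1/(1-z)$. Assembling the pieces yields
\[
H_{GL_2}((L_d'/L_d'')^{\delta},t_1,t_2,z)
=\frac{1}{1-z}H_{GL_2}((L_{d-1}'/L_{d-1}'')^{\delta},t_1,t_2,z)
+\frac{z}{1-z}\bigl(H_{GL_2}(K[X_{d-1}]^{\delta},t_1,t_2,z)-1\bigr),
\]
where the extra factor $z$ records the leading $x_d$ of each $[x_d,\ldots]$, and the $-1$ accounts for $V$ corresponding to non-constant monomials only. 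Setting $t_1=t_2=1$ then produces the single-variable version.

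The step that requires the most care is verifying the $\delta$-equivariance of $\phi$ and establishing basis-level uniqueness of the two-family decomposition — specifically checking that each basis element of $L_d'/L_d''$ lies in exactly one family and that the module action of $x_d$ acts freely on both summands. Once these structural facts are in place, the Hilbert series identity reduces to routine bookkeeping, and I expect no deeper obstacle.
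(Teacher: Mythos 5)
Your two-family decomposition of the basis is correct as a statement about (bi)graded vector spaces, and the generating-function bookkeeping does reproduce the right-hand side of the lemma for the full modules $L_d'/L_d''$, $(L_{d-1}'/L_{d-1}'')\otimes K[x_d]$ and $\omega(K[X_{d-1}])\otimes K[x_d]$. The gap is exactly at the step you flag as delicate: the second summand $V\otimes_K K[x_d]$ is \emph{not} $\delta$-stable, and $\phi$ is \emph{not} $\delta$-equivariant. The permutation identity only reorders positions $3,\ldots,k$; when $\delta$ hits position $2$ of $[x_d,x_{j_2},\ldots,x_{j_m}]$ and produces $[x_d,x_k,x_{j_3},\ldots]$ with $k>j_3$, returning to normal form requires the Jacobi identity and creates a cross term lying in the first family. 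Concretely, for $d=3$, $\delta(x_2)=x_1$, $\delta(x_1)=\delta(x_3)=0$, one computes
\[
\delta([x_3,x_2,x_2])=2[x_3,x_1,x_2]-[x_2,x_1]x_3,
\]
and $[x_2,x_1]x_3$ lies in $(L_2'/L_2'')K[x_3]$, not in $V\otimes K[x_3]$. Since taking kernels is not additive over a decomposition whose summands are not $\delta$-stable, your formula for $H_{GL_2}((L_d'/L_d'')^{\delta},t_1,t_2,z)$ does not follow as written. The argument can be rescued: the first family $W=(L_{d-1}'/L_{d-1}'')K[x_d]$ \emph{is} $\delta$-stable, your $\phi$ becomes $\delta$-equivariant on the quotient $(L_d'/L_d'')/W$, and complete reducibility of the $GL_2(K)$-action in characteristic $0$ makes the multiplicity series additive over the resulting exact sequence. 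Alternatively, replace $\phi^{-1}$ by the symmetrized section $\pi(x_{j_1}\cdots x_{j_n})=\sum_k[x_d,x_{j_k}]x_{j_1}\cdots\widehat{x_{j_k}}\cdots x_{j_n}$, which genuinely commutes with $\delta$; this is Lemma \ref{pi commutes with delta} of the paper and the basis of Theorem \ref{reduction to d-1}.

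For comparison, the paper's own proof of this lemma avoids all structural questions: it starts from the closed form $H(L_d'/L_d'',z_1,\ldots,z_d)=1+(z_1+\cdots+z_d-1)\prod_{j=1}^d(1-z_j)^{-1}$, substitutes $z_d\mapsto z$ and $z_j\mapsto t_1^{q_j}t_2^{r_j}z$, and verifies by elementary algebra that
\[
H_{GL_2}(L_d'/L_d'',t_1,t_2,z)=\frac{1}{1-z}H_{GL_2}(L_{d-1}'/L_{d-1}'',t_1,t_2,z)+\frac{z}{1-z}\bigl(H_{GL_2}(K[X_{d-1}],t_1,t_2,z)-1\bigr);
\]
the passage to constants is then immediate because the multiplicity series operator is linear and commutes with multiplication by series in $z$ alone. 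Your route is more structural and, once repaired as above, essentially anticipates Theorem \ref{reduction to d-1}; but as written the $\delta$-stability claim on which it rests is false.
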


\begin{proof}
Let $\delta$ act on $KX_{d-1}$ as $\delta=\delta(p_1,\ldots,p_{s-1})$.
Then it acts on $KX_d$ as $\delta(p_1,\ldots,p_{s-1},0)$.
By Lemma \ref{Hilbert series of free metabelian algebra} the Hilbert series of the commutator ideal of $L_d/L_d''$ is
\[
H(L_d'/L_d'',z_1,\ldots,z_d)=1+(z_1+\cdots+z_d-1)\prod_{j=1}^d\frac{1}{1-z_j}.
\]
(In the Hilbert series $H(L_d/L_d'',z_1,\ldots,z_d)$ in the lemma we remove the summand $z_1+\cdots+z_d$ which gives
the contribution of the elements of first degree.)
Following the procedure described in Section \ref{section Hilbert series} we replace its variables $z_j$ with
$t_1^{q_j}t_2^{r_j}z$, where the nonnegative integers $q_j,r_j$ depend on the size of the corresponding Jordan cell
and the position of the variable $x_j$ in the Jordan basis of $KX_d$. In particular, we have to replace the variable $z_n$ with $z$.
Hence
\[
H_{GL_2}(L_d'/L_d'',t_1,t_2,z)
=1+\left(\sum_{j=1}^{d-1}t_1^{q_j}t_2^{r_j}z+z-1\right)\frac{1}{1-z}\prod_{j=1}^{d-1}\frac{1}{1-t_1^{q_j}t_2^{r_j}z}
\]
\[
=\frac{1}{1-z}\left(\left(1+\left(\sum_{j=1}^{d-1}t_1^{q_j}t_2^{r_j}z-1\right)\prod_{j=1}^{d-1}\frac{1}{1-t_1^{q_j}t_2^{r_j}z}\right)
+z\left(\prod_{j=1}^{d-1}\frac{1}{1-t_1^{q_j}t_2^{r_j}z}-1\right)\right)
\]
\[
=\frac{1}{1-z}H_{GL_2}(L_{d-1}'/L_{d-1}'',t_1,t_2,z)+\frac{z}{1-z}(H_{GL_2}(K[X_{d-1}],t_1,t_2,z)-1).
\]
The Hilbert series $H_{GL_2}((L_d'/L_d'')^{\delta},t_1,t_2,z)$ is equal to the multiplicity series of $H_{GL_2}(L_d'/L_d'',t_1,t_2,z)$.
Similar statements hold for the other two Hilbert series $H_{GL_2}((L_{d-1}'/L_{d-1}'')^{\delta},t_1,t_2,z)$
and $H_{GL_2}(K[X_{d-1}]^{\delta},t_1,t_2,z)$.
Hence
\[
H_{GL_2}((L_d'/L_d'')^{\delta},t_1,t_2,z)=\frac{1}{1-z}H_{GL_2}((L_{d-1}'/L_{d-1}'')^{\delta},t_1,t_2,z)
\]
\[
+\frac{z}{1-z}(H_{GL_2}(K[X_{d-1}]^{\delta},t_1,t_2,z)-1)
\]
which implies that
\[
H((L_d'/L_d'')^{\delta},z)=\frac{1}{1-z}H((L_{d-1}'/L_{d-1}'')^{\delta},z)
+\frac{z}{1-z}(H(K[X_{d-1}]^{\delta},z)-1).
\]
\end{proof}

Let $\omega=\omega(K[X_{d-1}])$ be the augmentation ideal of $K[X_{d-1}]$, i.e., the ideal of all polynomials without
constant term. We define a $K$-linear map
\[
\pi:\omega(K[X_{d-1}])\to L_d'/L_d''
\]
by
\[
\pi(x_{j_1}\cdots x_{j_n})=\sum_{k=1}^n[x_d,x_{j_k}]x_{j_1}\cdots x_{j_{k-1}}x_{j_{k+1}}\cdots x_{j_n},
\quad x_{j_1}\cdots x_{j_n}\in K[X_{d-1}],n\geq 1.
\]

\begin{lemma}\label{pi commutes with delta}
{\rm (i)} The map $\pi$ satisfies the equality
\[
\pi(uv)=\pi(u)v+\pi(v)u,\quad u,v\in \omega.
\]
{\rm (ii)} The derivation $\delta$ and the map $\pi$ commute.
\end{lemma}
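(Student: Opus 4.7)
The plan is to first re-express $\pi$ in a compact form that makes both parts transparent. On a monomial $u = x_{j_1}\cdots x_{j_n}$, counting how many times each variable $x_j$ appears shows
\[
\pi(u) = \sum_{j=1}^{d-1}[x_d,x_j]\,\partial_j(u),
\]
where $\partial_j$ is the ordinary partial derivative on $K[X_{d-1}]$ and $[x_d,x_j]\partial_j(u)$ is understood in the $K[X_d]$-module $L_d'/L_d''$. Both statements of the lemma should follow cleanly from this reformulation.

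For (i), the identity is a direct consequence of the Leibniz rule for the $\partial_j$ combined with distributivity and commutativity of the module action of $K[X_d]$:
\[
\pi(uv) = \sum_{j}[x_d,x_j]\bigl(\partial_j(u)\,v + u\,\partial_j(v)\bigr) = \pi(u)\,v + u\,\pi(v) = \pi(u)\,v + \pi(v)\,u,
\]
the last equality because $K[X_d]$ is commutative and acts on $L_d'/L_d''$. The hypothesis $u,v\in\omega$ is only needed to ensure that $uv\in\omega$ so that $\pi$ applies.

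For (ii), the argument reduces to a single commutator computation. Writing $\delta(x_j)=\sum_i\alpha_{ij}x_i$ and checking directly on linear, then on arbitrary monomials, gives
\[
[\delta,\partial_j] = -\sum_{i=1}^{d-1}\alpha_{ji}\,\partial_i.
\]
On the other side, since $\delta$ is a derivation of $L_d/L_d''$ with $\delta(x_d)=0$, the Leibniz rule yields $\delta([x_d,x_j]f)=[x_d,\delta(x_j)]f+[x_d,x_j]\delta(f)$ for $f\in K[X_d]$; this compatibility with the module action follows inductively from $\delta[w,x_k]=[\delta w,x_k]+[w,\delta x_k]$. Applying it with $f=\partial_j(u)$ and subtracting $\pi(\delta(u))=\sum_j[x_d,x_j]\partial_j(\delta(u))$ leaves
\[
\delta\pi(u)-\pi\delta(u) = \sum_{i,j}\alpha_{ij}[x_d,x_i]\partial_j(u) - \sum_{i,j}\alpha_{ji}[x_d,x_j]\partial_i(u),
\]
and swapping the names of the dummy indices in the second sum shows that the two halves cancel.

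I do not expect any genuine obstacle; the proof is essentially bookkeeping. The one point that requires care is distinguishing the action of $\delta$ as a derivation of $K[X_{d-1}]$ from its action on the Lie algebra, and verifying that the two are compatible with the $K[X_d]$-module structure on $L_d'/L_d''$. Once the identity $\pi=\sum_j[x_d,x_j]\partial_j$ is established, both parts of the lemma collapse to the commutator relation $[\delta,\partial_j]=-\sum_i\alpha_{ji}\partial_i$ and a routine index swap.
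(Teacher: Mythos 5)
Your proof is correct, and for part (ii) it takes a genuinely different route from the paper. The key move is the closed form $\pi(u)=\sum_{j=1}^{d-1}[x_d,x_j]\,\partial_j(u)$, which is indeed what the defining sum over positions computes (the positions where $x_{j_k}=x_j$ contribute exactly $m_j\,u/x_j=\partial_j(u)$ copies of $[x_d,x_j]$). With that, part (i) becomes the Leibniz rule for $\partial_j$ plus commutativity of the $K[X_d]$-action --- essentially the same computation the paper does directly on monomials, just packaged differently. For part (ii) the paper instead argues by induction on the length of monomials: it checks the degree-one case by hand and then propagates using part (i) together with the fact that $\delta$ is a derivation. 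Your argument replaces that induction by the single operator identity $[\delta,\partial_j]=-\sum_i\alpha_{ji}\partial_i$ (which follows from writing $\delta=\sum_{i,j}\alpha_{ij}x_i\partial_j$ on $K[X_{d-1}]$) and an index swap; this is shorter and arguably more transparent, at the price of having to verify separately the compatibility $\delta(wf)=\delta(w)f+w\delta(f)$ of $\delta$ with the module action --- a point you correctly identify and justify by induction on the degree of $f$ from $\delta[w,x_k]=[\delta(w),x_k]+[w,\delta(x_k)]$. Both proofs are sound; yours trades the paper's structural induction for a one-line commutator calculus, and as a bonus the identity $\pi=\sum_j[x_d,x_j]\partial_j$ makes part (i) an immediate corollary rather than a separate monomial count.
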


\begin{proof}
(i) It is sufficient to show the equality for $u,v$ being monomials only. Let
$u=x_{i_1}\cdots x_{i_m}$ and $v=x_{j_1}\cdots x_{j_n}$. We use the standard notation
$x_{j_1}\cdots \widehat{x_{j_k}}\cdots x_{j_n}$ to indicate that $x_{j_k}$ does not
participate in the product. Then
\[
\pi(uv)=\pi(x_{i_1}\cdots x_{i_m}x_{j_1}\cdots x_{j_n})
=\left(\sum_{l=1}^m[x_d,x_{i_l}]x_{i_1}\cdots \widehat{x_{i_l}}\cdots x_{i_m}\right)(x_{j_1}\cdots x_{j_n})
\]
\[
+\left(\sum_{k=1}^n[x_d,x_{j_k}]x_{j_1}\cdots \widehat{x_{j_k}}\cdots x_{j_n}\right)(x_{i_1}\cdots x_{i_m})
=\pi(u)v+\pi(v)u.
\]

(ii) Again, it is sufficient to show the equality for monomials only.
We proceed by induction on the length of the monomials. If $u=x_j$ and
\[
\delta(x_j)=\sum_{i=1}^{d-1}\alpha_{ij}x_i,\quad \alpha_{ij}\in K, j=1,\ldots,d-1,
\]
then
\[
\pi(\delta(x_j))=\sum_{i=1}^{d-1}\alpha_{ij}\pi(x_i)=\sum_{i=1}^{d-1}\alpha_{ij}[x_d,x_i]
=[x_d,\sum_{i=1}^{d-1}\alpha_{ij}(x_i)]=\delta(\pi(x_j))
\]
because $\delta(x_d)=0$.
Let the monomials $u$ and $v$ belong to $\omega$. Using that $\delta$ is a derivation of $L_d/L_d''$
which by the inductive arguments commute with $\pi$ on $u$ and $v$, and applying (i), we obtain
\[
\delta(\pi(uv))=\delta(\pi(u)v+\pi(v)u)
=\delta(\pi(u))v+\pi(u)\delta(v)+\delta(\pi(v))u+\pi(v)\delta(u)
\]
\[
=\pi(\delta(u))v+\pi(u)\delta(v)+\pi(\delta(v))u+\pi(v)\delta(u)=\pi(\delta(u)).
\]
\end{proof}

The next theorem and its corollary are the main results of the section.

\begin{theorem}\label{reduction to d-1}
Let $X_d$ be a Jordan basis of the derivation $\delta$ acting on $KX_d$ and let $\delta$ have a $1\times 1$ Jordan cell
corresponding to $x_d$. Let $\{v_i\mid i\in I\}$ and $\{u_j\mid j\in J\}$ be, respectively, homogeneous bases
of $(L_{d-1}'/L_{d-1}'')^{\delta}$ and $\omega(K[X_{d-1}])^{\delta}$ with respect to both $\mathbb Z$- and ${\mathbb Z}^2$-gradings.
Then $(L_d'/L_d'')^{\delta}$ has a basis
\[
\{v_ix_d^n,\pi(u_j)x_d^n\mid i\in I,j\in J,n\geq 0\}.
\]
\end{theorem}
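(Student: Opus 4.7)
The plan is three-fold: (a) verify each listed element lies in $(L_d'/L_d'')^{\delta}$; (b) prove the collection is $K$-linearly independent using the Shmel'kin embedding; and (c) match Hilbert series via Lemma~\ref{reduction of Hilbert series}.

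For (a), since $\delta$ is a derivation of $L_d/L_d''$ and $K[X_d]$ acts on $L_d'/L_d''$ by iterated $\mathrm{ad}$, one has $\delta(w\cdot f)=\delta(w)\cdot f+w\cdot\delta(f)$ for $w\in L_d'/L_d''$ and $f\in K[X_d]$. Combined with $\delta(x_d)=0$, this yields $\delta(v_i x_d^n)=\delta(v_i)x_d^n=0$, while Lemma~\ref{pi commutes with delta} gives $\delta(\pi(u_j)x_d^n)=\pi(\delta(u_j))x_d^n=0$.

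For (b), unwinding the definition of $\pi$ shows that as an element of $L_d'/L_d''$,
\[
\pi(u)=\sum_{i<d}[x_d,x_i]\,\partial_i u,\qquad u\in K[X_{d-1}],
\]
where $\partial_i$ is the usual partial derivative. Using $\varepsilon([x_d,x_i])=a_d x_i-a_i x_d$ and Euler's identity, for a homogeneous $u$ of degree $m$ this gives
\[
\varepsilon(\pi(u))=m\,a_d\,u-x_d\sum_{i<d}a_i\,\partial_i u.
\]
Now suppose $\sum_{i,n}\alpha_{i,n}v_i x_d^n+\sum_{j,m}\beta_{j,m}\pi(u_j)x_d^m=0$. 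Since $v_i\in L_{d-1}'/L_{d-1}''$ contributes no $a_d$-part in $A_d\operatorname{wr}B_d$, the $a_d$-coefficient reduces to $\sum_{j,m}\beta_{j,m}(\deg u_j)u_j x_d^m=0$. Matching powers of $x_d$ and using linear independence of $\{u_j\}$ together with $\deg u_j\ge 1$ forces $\beta_{j,m}=0$. The surviving relation $\sum_{i,n}\alpha_{i,n}v_i x_d^n=0$, separated on each $a_k$-coefficient ($k<d$) and each power of $x_d$, reduces to $\sum_i\alpha_{i,n}v_i=0$ in $L_{d-1}'/L_{d-1}''$, whence $\alpha_{i,n}=0$.

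For (c), the span of $\{v_ix_d^n\}$ has bigraded Hilbert series $\frac{1}{1-z}H_{GL_2}((L_{d-1}'/L_{d-1}'')^{\delta},t_1,t_2,z)$ since $x_d$ is $GL_2$-invariant of bidegree $(0,0)$ and total degree $1$. A direct check shows that $\pi$ preserves bidegree while raising total degree by $1$, so the span of $\{\pi(u_j)x_d^m\}$ has Hilbert series $\frac{z}{1-z}(H_{GL_2}(K[X_{d-1}]^{\delta},t_1,t_2,z)-1)$. By (b) these spans add, and by Lemma~\ref{reduction of Hilbert series} the sum equals $H_{GL_2}((L_d'/L_d'')^{\delta},t_1,t_2,z)$. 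Combined with (a), the listed elements form the claimed basis. The delicate step is (b): one needs the explicit identification $\pi(u)=\sum_{i<d}[x_d,x_i]\partial_i u$ together with Euler's identity to obtain the clean expression for the $a_d$-component of $\varepsilon(\pi(u))$, which is what decouples the two families.
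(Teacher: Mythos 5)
Your proposal is correct and follows essentially the same route as the paper's proof: membership in $(L_d'/L_d'')^{\delta}$, linear independence via the $a_d$-coefficient in the Shmel'kin embedding (where your explicit formula $\pi(u)=\sum_{i<d}[x_d,x_i]\partial_i u$ together with Euler's identity is exactly the paper's computation $\pi(u_j)=n_ja_du_j-\sum_k a_kh_{kj}x_d$), and a dimension count against Lemma~\ref{reduction of Hilbert series}. The only difference is cosmetic -- you spell out step (a) and the partial-derivative identification, which the paper leaves implicit.
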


\begin{proof}
The Hilbert series of $(L_{d-1}'/L_{d-1}'')^{\delta}$ and $\omega(K[X_{d-1}])^{\delta}$ are equal, respectively,
to the generating functions of their bases. Hence
\[
H_{GL_2}((L_{d-1}'/L_{d-1}'')^{\delta},t_1,t_2,z)=\sum_{i\in I}t_1^{q_i}t_2^{r_i}z^{m_i},
\]
where $v_i$ is of bidegree $(p_i,q_i)$ and of total degree $m_i$. Since $x_d$ is of bidegree $(0,0)$ and of total degree 1,
the generating function of the set
$V=\{v_ix_d^n\mid i\in I,n\geq 0\}$ is
\[
G(V,t_1,t_2,z)=\sum_{n\geq 0}\sum_{i\in I}t_1^{q_i}t_2^{r_i}z^{m_i}z^n=\frac{1}{1-z}H_{GL_2}((L_{d-1}'/L_{d-1}'')^{\delta},t_1,t_2,z).
\]
The map $\pi$ sends the monomials of $\omega(K[X_{d-1}])$ to linear combinations of commutators with an extra variable $x_d$
in the beginning of each commutator. Hence, if the Hilbert series of $\omega(K[X_{d-1}])^{\delta}$ is
\[
H_{GL_2}(\omega(K[X_{d-1}])^{\delta},t_1,t_2,z)=H_{GL_2}(K[X_{d-1}]^{\delta},t_1,t_2,z)-1
=\sum_{n\geq 0}\sum_{j\in J}t_1^{k_j}t_2^{l_j}z^{n_j},
\]
where the bidegree of $u_j$ is $(k_j,l_j)$ and its total degree is $n_j$, then
the generating function of the set $U=\{\pi(u_j)x_d^n\mid j\in J,n\geq 0\}$ is
\[
G(U,t_1,t_2,z)=\sum_{n\geq 0}\sum_{j\in J}t_1^{k_j}t_2^{l_j}z^{n_i+1}z^n
=\frac{z}{1-z}(H_{GL_2}(K[X_{d-1}]^{\delta},t_1,t_2,z)-1).
\]
Hence, by Lemma \ref{reduction of Hilbert series}
\[
H_{GL_2}((L_d'/L_d'')^{\delta},t_1,t_2,z)=G(V,t_1,t_2,z)+G(U,t_1,t_2,z).
\]
Since both sets $V$ and $U$ are contained in $(L_d'/L_d'')^{\delta}$, we shall conclude that
$V\cup U$ is a basis of $(L_d'/L_d'')^{\delta}$ if we show that the elements of $V\cup U$
are linearly independent. For this purpose it is more convenient to work in the abelian wreath product
$A_d\text{\rm wr}B_d$. The elements $v_i$ belong to $L_{d-1}'/L_{d-1}''\subset A_d\text{\rm wr}B_d$
and hence are of the form
\[
v_i=\sum_{k=1}^{d-1}a_kg_{ki}(X_{d-1}),\quad g_{ki}(X_{d-1})\in K[X_{d-1}].
\]
Hence
\[
v_ix_d^n=\sum_{k=1}^{d-1}a_kg_{ki}(X_{d-1})x_d^n
\]
On the other hand, the elements $\pi(u_j)$ are of the form
\[
\pi(u_j)=\sum_{k=1}^{d-1}[x_d,x_k]h_{kj}(X_{d-1})=n_ja_d\sum_{k=1}^{d-1}x_kh_{kj}(X_{d-1})
-\sum_{k=1}^{d-1}a_kh_{kj}(X_{d-1})x_d
\]
\[
=n_ja_du_j-\sum_{k=1}^{d-1}a_kh_{kj}(X_{d-1})x_d.
\]
Hence
\[
\pi(u_j)x^n=(n_ja_du_j-\sum_{k=1}^{d-1}a_kh_{kj}(X_{d-1})x_d)x_d^n.
\]
Let $v=\sum\xi_{in}v_ix_d^n+\sum\eta_{jn}\pi(u_j)x^n=0$ for some $\xi_{in},\eta_{jn}\in K$.
Since the elements $u_j$ are linearly independent in $\omega(K[X_{d-1}])^{\delta}$, comparing the coefficient of
$a_d$ in $v$ we conclude that $\eta_{jn}=0$. Then, using that the elements $v_i$ are linearly independent in
$(L_{d-1}'/L_{d-1}'')^{\delta}$, we derive that $\xi_{in}=0$. Hence the set $V\cup U$ is a basis of $(L_d'/L_d'')^{\delta}$.
\end{proof}

\begin{corollary}\label{obtaining generators from smaller sets}
Let $X_d$ be a Jordan basis of the derivation $\delta$ acting on $KX_d$ and let $\delta$ have a $1\times 1$ Jordan cell
corresponding to $x_d$. Let $\{c_1,\ldots,c_k\}$ and $\{f_1,\ldots,f_l\}$ be, respectively, homogeneous generating sets
of the $K[X_{d-1}]^{\delta}$-module $(L_{d-1}'/L_{d-1}'')^{\delta}$ and of the algebra of constants
$K[X_{d-1}]^{\delta}$.
Then the $K[X_d]^{\delta}$-module $(L_d'/L_d'')^{\delta}$ is generated by the set
$\{c_1,\ldots,c_k\}\cup\{\pi(f_1),\ldots,\pi(f_l)\}$.
\end{corollary}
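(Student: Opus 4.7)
The plan is to combine Theorem~\ref{reduction to d-1} with the Leibniz-like property of $\pi$ from Lemma~\ref{pi commutes with delta}(i). By Theorem~\ref{reduction to d-1}, $(L_d'/L_d'')^{\delta}$ admits a basis of the form $\{v_ix_d^n,\pi(u_j)x_d^n\}$, where $\{v_i\}$ is a homogeneous basis of $(L_{d-1}'/L_{d-1}'')^{\delta}$ and $\{u_j\}$ a homogeneous basis of $\omega(K[X_{d-1}])^{\delta}$. It therefore suffices to show that each such basis element lies in the $K[X_d]^{\delta}$-submodule generated by $\{c_1,\ldots,c_k\}\cup\{\pi(f_1),\ldots,\pi(f_l)\}$.

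For the first family, since the $c_p$ generate $(L_{d-1}'/L_{d-1}'')^{\delta}$ as a $K[X_{d-1}]^{\delta}$-module, I write $v_i=\sum_p g_{pi}c_p$ with $g_{pi}\in K[X_{d-1}]^{\delta}$. Because $\delta(x_d)=0$, both $g_{pi}$ and $x_d^n$ lie in $K[X_d]^{\delta}$, hence
\[
v_ix_d^n=\sum_p (g_{pi}x_d^n)\,c_p
\]
is a $K[X_d]^{\delta}$-linear combination of the $c_p$.

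For the second family, the key step is to extend $\pi$ to all of $K[X_{d-1}]$ by setting $\pi(1)=0$. With this extension, Lemma~\ref{pi commutes with delta}(i) upgrades to the full Leibniz rule and turns $\pi$ into a $K$-linear derivation from $K[X_{d-1}]$ into the $K[X_{d-1}]$-module $L_d'/L_d''$. Expanding $u_j\in\omega(K[X_{d-1}])^{\delta}$ as a polynomial without constant term in the homogeneous algebra generators $f_1,\ldots,f_l$ of $K[X_{d-1}]^{\delta}$ and applying the Leibniz rule monomial by monomial produces
\[
\pi(u_j)=\sum_{r=1}^{l} h_{rj}\,\pi(f_r),\qquad h_{rj}\in K[X_{d-1}]^{\delta}\subseteq K[X_d]^{\delta}.
\]
Multiplying by $x_d^n\in K[X_d]^{\delta}$ then expresses $\pi(u_j)x_d^n$ as a $K[X_d]^{\delta}$-linear combination of the $\pi(f_r)$, completing the proof.

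The main subtlety, rather than a genuine obstacle, is the observation that although Lemma~\ref{pi commutes with delta}(i) is stated only for $u,v\in\omega$, the extension $\pi(1):=0$ makes $\pi$ a bona fide derivation; this is precisely what allows the polynomial expansion in the $f_r$ to be converted into a $K[X_{d-1}]^{\delta}$-linear combination of the $\pi(f_r)$. Everything else is formal bookkeeping dictated by Theorem~\ref{reduction to d-1}.
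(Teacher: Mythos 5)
Your proposal is correct and follows essentially the same route as the paper: both arguments take the basis $\{v_ix_d^n,\pi(u_j)x_d^n\}$ from Theorem~\ref{reduction to d-1}, express the $v_i$ as $K[X_{d-1}]^{\delta}$-combinations of the $c_p$, and use the Leibniz-type identity of Lemma~\ref{pi commutes with delta}(i) to write $\pi(u_j)$ as a $K[X_{d-1}]^{\delta}$-combination of the $\pi(f_r)$. Your remark about extending $\pi$ by $\pi(1)=0$ is a harmless cosmetic refinement of the same induction the paper leaves implicit.
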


\begin{proof}
Clearly, the $K[X_{d-1}]^{\delta}$-module $(L_{d-1}'/L_{d-1}'')^{\delta}$ is spanned by the elements
$c_jf_1^{q_1}\cdots f_l^{q_l}$. In particular, in this way we obtain all elements $v_j$ from the basis of
the vector space $(L_{d-1}'/L_{d-1}'')^{\delta}$.
Since $x_d\in K[X_d]^{\delta}$, we obtain also all elements $v_jx^n$.
By Lemma \ref{pi commutes with delta} (i) we obtain that the $K[X_{d-1}]^{\delta}$-module $\pi(\omega(K[X_{d-1}])^{\delta})$
is generated by $\pi(f_1),\ldots,\pi(f_l)$. Hence all elements $\pi(u_j)$, where $\{u_j\mid j\in J\}$ is the basis
of $\omega(K[X_{d-1}])^{\delta}$, belong to the $K[X_{d-1}]^{\delta}$-module generated by $\pi(f_1),\ldots,\pi(f_l)$.
In this way we obtain also the elements $\pi(u_j)x_d^n$ and derive that $\{c_1,\ldots,c_k\}\cup\{\pi(f_1),\ldots,\pi(f_l)\}$
generate the $K[X_d]^{\delta}$-module $(L_d'/L_d'')^{\delta}$.
\end{proof}

\begin{example}\label{blocks of 3 and 1}
Let $d=4$ and let the Jordan normal form of $\delta$ have two cells, of size $3\times 3$ and $1\times 1$, respectively.
Hence $\delta=\delta(2,0)$ in our notation.
By Example \ref{block of 3} for $d=3$ and $\delta=\delta(2)$, the algebra $K[X_3]^{\delta}$ is generated by
$f_1=x_1$ and $f_2=x_2^2-2x_1x_3$. The $K[X_3]^{\delta}$-module $(L_3'/L_3'')^{\delta}$ is generated by
$c_1=[x_2,x_1]$ and $c_2=[x_3,x_1,x_1]-[x_2,x_1,x_2]$. Hence, by Corollary \ref{obtaining generators from smaller sets},
the $K[X_4]^{\delta}$-module $(L_4'/L_4'')^{\delta}$ is generated by $c_1,c_2$ and
\[
\pi(f_1)=[x_4,x_1],\quad \pi(f_2)=2([x_4,x_2,x_2]-[x_4,x_1,x_3]-[x_4,x_3,x_1]).
\]
\end{example}

\section{Generating sets for small number of generators}

In this section we shall find the generators of the $K[X_d]^{\delta}$-module $(L_d'/L_d'')^{\delta}$
for $d\leq 4$ and for $d=6$, $\delta=\delta(1,1,1)$.
By Corollary \ref{obtaining generators from smaller sets}, we shall assume that $\delta$ has no $1\times 1$ Jordan cells.

\begin{example}\label{block of 3}
Let $d=3$, $\delta=\delta(2)$, and let $\delta(x_1)=0$, $\delta(x_2)=x_1$, $\delta(x_3)=x_2$.
It is well known, see e.g., \cite{N}, that
$K[X_3]^{\delta}$ is generated by the algebraically independent polynomials
$f_1=x_1$, $f_2=x_2^2-2x_1x_3$. Hence
\[
H_{GL_2}(K[X_3]^{\delta},t_1,t_2,z)=\frac{1}{(1-t_1^2z)(1-t_1^2t_2^2z^2)}.
\]
By Example \ref{Hilbert series for small d}
\[
H_{GL_2}((L_3'/L_3'')^{\delta},t_1,t_2,z)=\frac{t_1^3t_2z^2}{(1-t_1^2z)(1-t_1t_2z)}
=\frac{t_1^3t_2z^2(1+t_1t_2z)}{(1-t_1^2z)(1-t_1^2t_2^2z^2)}.
\]
It is easy to see that the Lie elements
\[
c_1=[x_2,x_1],\quad c_2=[x_3,x_1,x_1]-[x_2,x_1,x_2]
\]
belong to $(L_3'/L_3'')^{\delta}$ and are of bidegree $(3,1)$ and $(4,2)$, respectively.
If $c_1$ and $c_2$ generate a free $K[X_3]^{\delta}$-submodule of
$(L_3'/L_3'')^{\delta}$, its Hilbert series is
\[
(t_1^3t_2z^2+t_1^4t_2^2z^3)H_{GL_2}(K[X_3]^{\delta},t_1,t_2,z)
=\frac{t_1^3t_2z^2(1+t_1t_2z)}{(1-t_1^2z)(1-t_1^2t_2^2z^2)}
\]
\[
=H_{GL_2}((L_3'/L_3'')^{\delta},t_1,t_2,z).
\]
Then we can derive that $c_1$ and $c_2$ generate the whole $K[X_3]^{\delta}$-module $(L_3'/L_3'')^{\delta}$.
Hence it is sufficient to show that $c_1$ and $c_2$ generate a free $K[X_3]^{\delta}$-module.
Let $c_1u_1(f_1,f_2)+c_2u_2(f_1,f_2)=0$ for some $u_1(f_1,f_2),u_2(f_1,f_2)\in K[f_1,f_2]$.
Working in the wreath product $A_3\text{\rm wr}B_3$ we obtain
\[
0=(a_2x_1-a_1x_2)u_1(f_1,f_2)+((a_3x_1-a_1x_3)x_1-(a_2x_1-a_1x_2)x_2)u_2(f_1,f_2)
\]
\[
=a_1(-x_2u_1(f_1,f_2)+(-x_1x_3+x_2^2)u_2(f_1,f_2))
\]
\[
+a_2(x_1u_1(f_1,f_2)-x_1x_2u_2(f_1,f_2))+a_3x_1^2u_2(f_1,f_2).
\]
Since the coefficient $x_1^2u_2(f_1,f_2)$ of $a_3$ is equal to 0, we obtain that $u_2(f_1,f_2)=0$. Similarly, the coefficient of
$a_1$ gives that $u_1(f_1,f_2)=0$ and this shows that the $K[X_3]^{\delta}$-module $(L_3'/L_3'')^{\delta}$
is generated by $c_1,c_2$. As a vector space $(L_3/L_3'')^{\delta}$ is spanned by the elements
$x_1$, $c_1f_1^{q_1}f_2^{r_1}$, and $c_2f_1^{q_2}f_2^{r_2}$, $q_j,r_j\geq 0$. This easily implies that
the algebra $(L_3/L_3'')^{\delta}$ is generated by the infinite set
\[
\{x_1, c_1f_2^{r_1}, c_2f_2^{r_2}\mid r_j\geq 0\}.
\]
\end{example}

\begin{example}\label{block of 4}
Let $d=4$, $\delta=\delta(3)$, and let $\delta(x_1)=0$, $\delta(x_2)=x_1$, $\delta(x_3)=x_2$, $\delta(x_4)=x_3$.
Then, see \cite{N},
$K[X_4]^{\delta}$ is generated by
\[
f_1=x_1,\quad f_2=x_2^2-2x_1x_3,\quad f_3=x_2^3-3x_1x_2x_3+3x_1^2x_4,
\]
\[
f_4=x_2^2x_3^2-2x_2^3x_4+6x_1x_2x_3x_4-\frac{8}{3}x_1x_3^3-3x_1^2x_4^2.
\]
The generators of $K[X_4]^{\delta}$ satisfy the defining relation
\[
f_3^2=f_2^3-3f_1^2f_4
\]
and the algebra $K[X_4]^{\delta}$ has the presentation
\[
K[X_4]^{\delta}\cong K[f_1,f_2,f_3,f_4\mid f_3^2=f_2^3-3f_1^2f_4].
\]
In particular, as a vector space $K[X_4]^{\delta}$ has a basis
\[
\{f_1^{q_1}f_2^{q_2}f_4^{q_4},f_1^{q_1}f_2^{q_2}f_3f_4^{q_4}\mid q_1,q_2,q_4\geq 0\}
\]
and its Hilbert series is
\[
H_{GL_2}(K[X_4]^{\delta},t_1,t_2,z)
=\frac{1+t_1^6t_2^3z^3}{(1-t_1^3z)(1-t_1^4t_2^2z^2)(1-t_1^6t_2^6z^4)}.
\]
By Example \ref{Hilbert series for small d}
the Hilbert series of $(L_4'/L_4'')^{\delta}$ is
\[
H_{GL_2}((L_4'/L_4'')^{\delta},t_1,t_2,z)
=\frac{t_1^3t_2z^2(t_1^2+t_2^2+t_1^4t_2^4z^2+t_1^5t_2^6z^3-t_1^8t_2^6z^4)}{(1-t_1^3z)(1-t_1^2t_2z)(1-t_1^6t_2^6z^4)}
\]
\[
=\frac{t_1^3t_2z^2(t_1^2+t_2^2+t_1^4t_2^4z^2+t_1^5t_2^6z^3-t_1^8t_2^6z^4)(1+t_1^2t_2z)}{(1-t_1^3z)(1-t_1^4t_2^2z^2)(1-t_1^6t_2^6z^4)}
\]
\[
=(t_1^5t_2+t_1^3t_2^3)z^2(1+t_1^3z)+(t_1^7t_2^2+t_1^5t_2^4)z^3+\cdots
\]
This suggests that the $K[X_4]^{\delta}$-module $(L_4'/L_4'')^{\delta}$ has two generators $c_1$ and $c_2$ of bidegree $(5,1)$ and $(3,3)$,
respectively. They together with $c_1f_1$ and $c_2f_1$ give the contribution $(t_1^5t_2+t_1^3t_2^3)z^2(1+t_1^3z)$. We also expect
two generators $c_3$ and $c_4$ of bidegree $(7,2)$ and $(5,4)$, respectively. By easy calculations we have found the explicit form of
$c_1,c_2,c_3,c_4$:
\[
c_1=[x_2,x_1],\quad c_2=[x_4,x_1]-[x_3,x_2],
\]
\[
c_3=[x_3,x_1,x_1]-[x_2,x_1,x_2],\quad
c_4=3[x_2,x_1,x_4]-2[x_3,x_1,x_3]+[x_3,x_2,x_2].
\]
For example, $c_4$ is a linear combination of all commutators of degree 3 and bidegree $(5,4)$:
$[x_2,x_1,x_4]$, $[x_4,x_1,x_2]$, $[x_3,x_1,x_3]$, and $[x_3,x_2,x_2]$:
\[
c_4=\gamma_1[x_2,x_1,x_4]+\gamma_2[x_4,x_1,x_2]+\gamma_3[x_3,x_1,x_3]+\gamma_4[x_3,x_2,x_2],
\quad \gamma_1,\gamma_2,\gamma_3,\gamma_4\in K,
\]
and the condition $\delta(c_4)=0$ gives
\[
0=\gamma_1[x_2,x_1,x_3]+\gamma_2([x_3,x_1,x_2]+[x_4,x_1,x_1])
\]
\[
+\gamma_3([x_2,x_1,x_3]+[x_3,x_1,x_2])+\gamma_4([x_3,x_1,x_2]+[x_3,x_2,x_1])
\]
\[
=(\gamma_1+\gamma_3-\gamma_4)[x_2,x_1,x_3]+(\gamma_2+\gamma_3+2\gamma_4)[x_3,x_1,x_2]+\gamma_2[x_4,x_1,x_1],
\]
Hence
\[
\gamma_1+\gamma_3-\gamma_4=\gamma_2+\gamma_3+2\gamma_4=\gamma_2=0
\]
and, up to a multiplicative constant, the only solution is
\[
\gamma_1=3,\quad \gamma_2=0,\quad \gamma_3=-2,\quad \gamma_4=0.
\]
Similarly, we obtain one more generator of bidegree $(7,5)$:
\[
c_5=3(-[x_3,x_1,x_1,x_4]+[x_2,x_1,x_2,x_4]+[x_3,x_1,x_2,x_3])
\]
\[
-4[x_2,x_1,x_3,x_3]-[x_3,x_2,x_2,x_2].
\]
The Hilbert series of the free $K[X_4]^{\delta}$-module generated by five elements of bidegree
$(5,1)$, $(3,3)$, $(7,2)$, $(5,4)$, and $(7,5)$ is
\[
H_{GL_2}(t_1,t_2,z)=\frac{t_1^3t_2z^2((1+t_1^2t_2z)(t_1^2+t_2^2)+t_1^4t_2^4z^2)(1+t_1^6t_2^3z^3)}{(1-t_1^3z)(1-t_1^4t_2^2z^2)(1-t_1^6t_2^6z^4)}.
\]
Hence
\[
H_{GL_2}(t_1,t_2,z)-H_{GL_2}((L_4'/L_4'')^{\delta},t_1,t_2,z)=(t_1^3-t_2^3)t_1^8t_2^4z^5+\cdots
\]
which suggests that there is a relation of bidegree $(11,4)$ and a generator of bidegree $(8,7)$.
Continuing in the same way, we have found the generators
\[
c_6=-9[x_2,x_1,x_1,x_4,x_4]+18[x_3,x_1,x_1,x_3,x_4]-12[x_4,x_1,x_1,x_3,x_3]
\]
\[
-9[x_3,x_1,x_2,x_2,x_4]+12[x_4,x_1,x_2,x_2,x_3]+4[x_2,x_1,x_3,x_3,x_3]
\]
\[
-6[x_3,x_1,x_2,x_3,x_3]-3[x_4,x_2,x_2,x_2,x_2]+3[x_3,x_2,x_2,x_2,x_3]
\]
\[
c_7=-18[x_3,x_1,x_1,x_1,x_4,x_4]+18[x_4,x_1,x_1,x_1,x_3,x_4]+18[x_2,x_1,x_1,x_2,x_4,x_4]
\]
\[
-9[x_4,x_1,x_1,x_2,x_2,x_4]
-18[x_2,x_1,x_1,x_3,x_3,x_4]+18[x_3,x_1,x_1,x_2,x_3,x_4]
\]
\[
-18[x_4,x_1,x_1,x_2,x_3,x_3]+8[x_3,x_1,x_1,x_3,x_3,x_3]-9[x_2,x_1,x_2,x_2,x_3,x_4]
\]
\[
-3[x_3,x_1,x_2,x_2,x_2,x_4]+15[x_4,x_1,x_2,x_2,x_2,x_3]+10[x_2,x_1,x_2,x_3,x_3,x_3]
\]
\[
-12[x_3,x_1,x_2,x_2,x_3,x_3]-3[x_4,x_2,x_2,x_2,x_2,x_2]
+3[x_3,x_2,x_2,x_2,x_2,x_3]
\]
of bidegree $(8,7)$ and $(10,8)$, respectively.
We have also found the relations
\[
R_1(11,4): c_1f_3=-c_3f_2+c_4f_1^2,
\]
\[
R_2(13,5): c_3f_3=-(c_1f_2^2+c_5f_1^2),
\]
\[
R_3(11,7): c_4f_3=-(3c_1f_4+c_5f_2),
\]
\[
R_4(11,7): c_6f_1=3(c_1f_4-c_2f_2^2+c_5f_2),
\]
\[
R_5(13,8): c_5f_3=3c_3f_4-c_4f_2^2,
\]
\[
R_6(13,8): c_7f_1=3(-c_2f_2f_3+2c_3f_4-c_4f_2^2),
\]
\[
R_7(14,10): c_6f_3=3c_4f_1f_4+c_7f_2,
\]
\[
R_8(16,11): c_7f_3=9c_2f_1f_2f_4-6c_5f_1f_4+c_6f_2^2.
\]
The above relations show that $c_jf_3$ can be replaced with a linear combination of other generators if $j\not=2$.
Similarly for $c_6f_1$ and $c_7f_1$. Hence the $K[X_d]^{\delta}$-module generated by $c_1,\ldots,c_7$ is spanned by
\[
C=\{c_jf_1^{q_j}f_2^{r_j}f_4^{s_j}\mid q_j,r_j,s_j\geq 0,j=1,3,4,5\}
\]
\[
\cup \{c_jf_2^{r_j}f_4^{s_j}\mid r_j,s_j\geq 0,j=6,7\}
\cup \{c_2f_1^{q_2}f_2^{r_2}f_3^{\varepsilon}f_4^{s_2}\mid q_2,r_2,s_2\geq 0,\varepsilon=0,1\}.
\]
It is easy to check that the generating function of the set $C$ is equal to the Hilbert series of $(L_d'/L_d'')^{\delta}$.
Hence, if we show that the elements of $C$ are linearly independent, we shall conclude that
the $K[X_d]^{\delta}$-module $(L_d'/L_d'')^{\delta}$ is generated by $c_1,\ldots,c_7$. Let
\[
\sum_{j=1}^7c_ju_j+c_2f_3u_8=0,
\]
where $u_j$ are polynomials in $f_1,f_2,f_4$, $j=1,\ldots,8$, and $u_6,u_7$ do not depend on $f_1$.
We shall show that this implies that $u_j=0$,
$j=1,\ldots,8$. We shall work in the abelian wreath product $A_4\text{wr}B_4$ and shall denote by $v_i$ the coordinate of $a_i$
of $v\in A_4\text{wr}B_4$.
The four coordinates $v_i$ of
\[
v=\sum_{j=1}^7c_ju_j+c_2f_3u_8=\sum_{i=1}^4a_iv_i=0
\]
define a linear homogeneous system
\[
v_i=0,\quad i=1,\ldots,4,
\]
with unknowns $u_1,\ldots,u_8$.
First, we substitute $x_2=0$. Then $f_1,f_2,f_4$ become
\[
\bar f_1=x_1,\quad \bar f_2=-2x_1x_3,\quad
\bar f_4=-\frac{8}{3}x_1x_3^3-3x_1^2x_4^2.
\]
Similarly, $c_j$ and $c_2f_3$ become
\[
\bar c_1=a_2x_1,\quad \bar c_2=-a_1x_4+a_2x_3+a_4x_1,\quad
\bar c_3=(-a_1x_3+a_3x_1)x_1,
\]
\[
\bar c_4=2a_1x_3^2+3a_2x_1x_4-2a_3x_1x_3,\quad
\bar c_5=(3a_1x_3x_4-4a_2x_3^2-3a_3x_1x_4)x_1,
\]
\[
\bar c_6=(-6a_1x_3^2x_4+a_2(-9x_1x_4^2+4x_3^3)+18a_3x_1x_3x_4-12a_4x_1x_3^2)x_1,
\]
\[
\bar c_7=(-8a_1x_3^4-18a_2x_1x_3^2x_4+2a_3(-9x_1x_4^2+4x_3^3)x_1+18a_4x_1^2x_3x_4)x_1,
\]
\[
\bar c_2\bar f_3=(-3a_1x_4+3a_2x_3+3a_4x_1)x_1^2x_4.
\]
Direct calculations give that the coordinates $\bar v_i$ of $v=\displaystyle \sum_{j=1}^7\bar c_j\bar u_j+\bar c_2\bar f_3\bar u_8=0$ are
\[
-x_4\bar u_2-x_1x_3\bar u_3+2x_3^2\bar u_4+3x_1x_3x_4\bar u_5-6x_1x_3^2x_4\bar u_6-8x_1x_3^4\bar u_7-3x_1^2x_4^2\bar u_8=0,
\]
\[
x_1\bar u_1+x_3\bar u_2+3x_1x_4\bar u_4-4x_1x_3^2\bar u_5+(-9x_1x_4^2+4x_3^3)x_1\bar u_6-18x_1^2x_3^2x_4\bar u_7+3x_1^2x_3x_4\bar u_8=0,
\]
\[
(x_1\bar u_3-2x_3\bar u_4-3x_1x_4\bar u_5+18x_1x_3x_4\bar u_6+2(-9x_1x_4^2+4x_3^3)\bar u_7)x_1=0,
\]
\[
(\bar u_2-12x_1x_3^2\bar u_6+18x_1^2x_3x_4\bar u_7+3x_1^2x_4\bar u_8)x_1=0,
\]
where $\bar u_j=u_j(\bar f_1,\bar f_2,\bar f_4)$
Since $x_1v_1+x_2v_2+x_3v_3+x_4v_4=0$,
because $v$ belongs to the commutator ideal $L_4'/L_4''$,
we have that $x_1\bar v_1+x_3\bar v_3+x_4\bar v_4=0$.
Hence we can remove the first equation and obtain
\[
(x_1\bar u_1+x_3\bar u_2-4x_1x_3^2\bar u_5+(-9x_1x_4^2+4x_3^3)x_1\bar u_6)+3(\bar u_4-6x_1x_3^2\bar u_7+x_1x_3\bar u_8)x_1x_4=0,
\]
\[
(x_1\bar u_3-2x_3\bar u_4+2(-9x_1x_4^2+4x_3^3)\bar u_7)+3(-\bar u_5+6x_3\bar u_6)x_1x_4=0,
\]
\[
(\bar u_2-12x_1x_3^2\bar u_6)+3(6x_3\bar u_7+\bar u_8)x_1^2x_4=0.
\]
The variable $x_4$ participates in the polynomials $\bar f_1,\bar f_2,\bar f_4$ in even degrees only.
Hence $\bar u_1,\ldots,\bar u_8$ do not contain odd degrees of $x_4$. The only odd degrees of $x_4$ in the above equations come from
$3(\bar u_4-6x_1x_3^2\bar u_7+x_1x_3\bar u_8)x_1x_4$, $3(-\bar u_5+6x_3\bar u_6)x_1x_4$, and $3(6x_3\bar u_7+\bar u_8)x_1^2x_4$.
Hence
\[
(-\bar u_5+6x_3\bar u_6)x_1=-\bar f_1\bar u_5(\bar f_1,\bar f_2,\bar f_4)-3\bar f_2\bar u_6(\bar f_2,\bar f_4)=0,
\]
\[
(6x_3\bar u_7+\bar u_8)x_1=-3\bar f_2\bar u_7(\bar f_2,\bar f_4)+\bar f_1\bar u_8(\bar f_1,\bar f_2,\bar f_4)=0.
\]
Since $\bar f_1,\bar f_2,\bar f_4$ are algebraically independent in $K[x_1,x_3,x_4]$, the equations
\[
-\bar f_1\bar u_5(\bar f_1,\bar f_2,\bar f_4)-3\bar f_2\bar u_6(\bar f_2,\bar f_4)
=-3\bar f_2\bar u_7(\bar f_2,\bar f_4)+\bar f_1\bar u_8(\bar f_1,\bar f_2,\bar f_4)=0
\]
give that $\bar u_6=\bar u_7=0$ and, as a consequence, $\bar u_j=0$ for $j=1,\ldots,8$. Using again the algebraic independence
of $\bar f_1,\bar f_2,\bar f_4$ we obtain that $u_j=0$ for $j=1,\ldots,8$.
This completes the proof that the $K[X_4]^{\delta}$-module
$(L_4'/L_4'')^{\delta}$ is generated by $c_1,\ldots,c_7$.
As in the previous example we obtain that the algebra $(L_4/L_4'')^{\delta}$ is generated by
\[
\{x_1,c_jf_2^{r_j}f_4^{s_j},c_2f_2^{r_2}f_3f_4^{s_2}\mid r_j,s_j\geq 0,j=1,\ldots,7\}.
\]
\end{example}

Nowicki \cite{N} conjectured that if all Jordan cells of the Weitzeb\"ock derivation $\delta$ are of size $2\times 2$,
i.e., $\delta=\delta(1,\ldots,1)$, then $K[X_{2d}]^{\delta}$ is generated by
\[
\{x_{2j-1},x_{2k-1}x_{2l}-x_{2k}x_{2l-1}\mid j=1,\ldots,d,1\leq k<l\leq d\}.
\]
There are several proofs of the conjecture based on different ideas.
The unpublished proof by Derksen and the proof by Bedratyuk \cite{Bed} show that the result follows from
well known results of classical invariant theory. Khoury \cite{K1, K2} uses Gr\"obner bases techniques.
The proof by Drensky and Makar-Limanov \cite{DML} is based on elementary ideas and the approach by Kuroda \cite{Ku}
exploits earlier ideas of Kurano \cite{Kr} related with the Roberts counterexample to the Hilbert 14th problem \cite{R}.
In particular, \cite{DML} gives the Gr\"obner basis of the ideal of relations between the generators of the algebra
$K[X_{2d}]^{\delta}$ and a basis for $K[X_{2d}]^{\delta}$ as a vector space. The next examples handle the cases
$(L_4/L_4'')^{\delta}$, $\delta=\delta(1,1)$ and $(L_6/L_6'')^{\delta}$, $\delta=\delta(1,1,1)$.

\begin{example}\label{blocks of 2 plus 2}
Let $d=4$, $\delta=\delta(1,1)$, and let $\delta(x_1)=0$, $\delta(x_2)=x_1$, $\delta(x_3)=0$, $\delta(x_4)=x_3$.
Then, see \cite{N} and the comments above,
$K[X_4]^{\delta}$ is generated by the algebraically independent polynomials
$f_1=x_1$, $f_2=x_3$, $f_3=x_1x_4-x_2x_3$. Hence
\[
H_{GL_2}(K[X_4]^{\delta},t_1,t_2,z)=\frac{1}{(1-t_1z)^2(1-t_1t_2z^2)}.
\]
By Example \ref{Hilbert series for small d}
\[
H_{GL_2}((L_4'/L_4'')^{\delta},t_1,t_2,z)=\frac{t_1z^2(t_1+3t_2-t_1^2t_2z^2)}{(1-t_1z)^2(1-t_1t_2z^2)}.
\]
The Lie elements
\[
c_1=[x_3,x_1],\quad c_2=[x_2,x_1],\quad c_3=[x_4,x_3],\quad c_4=[x_4,x_1]-[x_3,x_2]
\]
belong to $(L_4'/L_4'')^{\delta}$ and are of bidegree $(2,0)$ for $c_1$ and $(1,1)$ for the other three elements.
It is easy to see that $c_1,c_2,c_3,c_4$ satisfy the relation
\[
c_1f_3+c_2f_2^2+c_3f_1^2-c_4f_1f_2=0.
\]
The $K[X_4]^{\delta}$-module generated by $c_1,c_2,c_3,c_4$ is spanned by the products
$c_jf_1^{q_j}f_2^{r_j}f_3^{s_j}$, $q_j,r_j,s_j\geq 0$. The above relation gives that we can express the elements
$c_1f_1^{q_1}f_2^{r_1}f_3^{s_1}$ with $s_1>0$ by elements which do not contain the factor $c_1f_3$.
Hence we may assume that $s_1=0$. The generating function of the set
\[
C=\{c_1f_1^{q_1}f_2^{r_1},c_jf_1^{q_j}f_2^{r_j}f_3^{s_j}\mid q_1,r_1,q_j,r_j,s_j\geq 0, j=2,3,4\}
\]
is
\[
G(C,t_1,t_2,z)=\frac{t_1^2z^2}{(1-t_1z)^2}+\frac{3t_1t_2z^2}{(1-t_1z)^2(1-t_1t_2z^2)}
=H_{GL_2}((L_4'/L_4'')^{\delta},t_1,t_2,z).
\]
Hence, if we show that the elements of the set $C$ are linearly independent we shall conclude that the
$K[X_4]^{\delta}$-module $(L_4'/L_4'')^{\delta}$ is generated by $c_1,c_2,c_3,c_4$.
Let
\[
c_1u_1(f_1,f_2)+c_2u_2(f_1,f_2,f_3)+c_3u_3(f_1,f_2,f_3)+c_4u_4(f_1,f_2,f_3)=0
\]
for some $u_1(f_1,f_2),u_j(f_1,f_2,f_3)\in K[f_1,f_2,f_3]$, $j=2,3,4$.
Working in the wreath product $A_4\text{\rm wr}B_4$ we obtain
\[
0=(a_3x_1-a_1x_3)u_1(x_1,x_3)+(a_2x_1-a_1x_2)u_2(x_1,x_3,x_1x_4-x_2x_3)
\]
\[
+(a_4x_3-a_3x_4)u_3(x_1,x_3,x_1x_4-x_2x_3)
\]
\[
+(a_4x_1-a_1x_4-a_3x_2+a_2x_3)u_4(x_1,x_3,x_1x_4-x_2x_3)
\]
\[
=a_1(-x_3u_1(x_1,x_3)-x_2u_2(x_1,x_3,x_1x_4-x_2x_3)-x_4u_4(x_1,x_3,x_1x_4-x_2x_3))
\]
\[
+a_2(x_1u_2(x_1,x_3,x_1x_4-x_2x_3)+x_3u_4(x_1,x_3,x_1x_4-x_2x_3))
\]
\[
+a_3(x_1u_1(x_1,x_3)-x_4u_3(x_1,x_3,x_1x_4-x_2x_3)-x_2u_4(x_1,x_3,x_1x_4-x_2x_3))
\]
\[
+a_4(x_3u_3(x_1,x_3,x_1x_4-x_2x_3)+x_1u_4(x_1,x_3,x_1x_4-x_2x_3)).
\]
In the coefficient of $a_1$ (which has to be equal to 0), the only expression which does not depend on $x_2$ and $x_4$ is
$-x_3u_1(x_1,x_3)$ and hence $u_1=0$. This implies that
$-x_2u_2-x_4u_4=0$ and
$u_2=x_4u$,
$u_4=-x_2u$ for some $u\in K[X_4]$.
Similarly, from the coefficient of $a_2$ we derive
$u_2=x_3v$,
$u_4=-x_1v$ for some $v\in K[X_4]$.
It follows from the equalities
\[
u_2=x_4u=x_3v,\quad u_4=-x_2u=-x_1v
\]
that $u_2=u_4=0$ which also implies that $u_3=0$. In this way
the $K[X_4]^{\delta}$-module $(L_4'/L_4'')^{\delta}$ is generated by $c_1,c_2,c_3,c_4$.
This also gives that the algebra $(L_4/L_4'')^{\delta}$ is generated by
\[
\{x_1,c_1f_2^{r_1},c_jf_2^{r_j}f_3^{s_j}\mid r_1,r_j,s_j\geq 0,j=2,3,4\}.
\]
\end{example}

\begin{example}\label{3 blocks of size 2}
Let $d=6$, $\delta=\delta(1,1,1)$, and let $\delta(x_1)=\delta(x_3)=\delta(x_5)=0$,
$\delta(x_2)=x_1$, $\delta(x_4)=x_3$, $\delta(x_6)=x_5$.
Then, see \cite{N} and \cite{DML},
$K[X_6]^{\delta}$ is generated by the polynomials
\[
f_1=x_1,\quad f_2=x_3,\quad f_3=x_5,
\]
\[
f_4=x_1x_4-x_2x_3,\quad f_5=x_1x_6-x_2x_5,\quad f_6=x_3x_6-x_4x_5,
\]
with the only defining relation
\[
\left\vert\begin{matrix}
x_1&x_3&x_5\\
x_1&x_3&x_5\\
x_2&x_4&x_6\\
\end{matrix}\right\vert=f_1f_6-f_2f_5+f_3f_4=0.
\]
Hence we can replace $f_2f_5$ with $f_1f_6+f_3f_4$ and $K[X_6]^{\delta}$ has a basis
\[
\{f_1^{q_1}f_2^{q_2}f_3^{q_3}f_4^{q_4}f_6^{q_6},f_1^{q_1}f_3^{q_3}f_4^{q_4}f_5^{q_5+1}f_6^{q_6}
\mid q_j\geq 0\}.
\]
The Hilbert series of $K[X_6]^{\delta}$ is
\[
H_{GL_2}(K[X_6]^{\delta},t_1,t_2,z)=\frac{1}{(1-t_1z)^3(1-t_1t_2z^2)^2}+\frac{t_1t_2z^2}{(1-t_1z)^2(1-t_1t_2)^3}.
\]
By Example \ref{Hilbert series for small d}
\[
H_{GL_2}((L_6'/L_6'')^{\delta},t_1,t_2,z)=\frac{t_1z^2p(t_1,t_2,z)}{(1-t_1z)^3(1-t_1t_2z^2)^3},
\]
\[
p=3(t_1+2t_2)+t_1(-t_1+t_2)z-9t_1^2t_2z^2
+3t_1^2t_2(-3t_2+t_1)z^3
\]
\[
+t_1^2t_2^2(9t_1-t_2)z^4+3t_1^3t_2^2(t_2-t_1)z^5-3t_1^4t_2^3z^6+t_1^5t_2^3z^7).
\]
Following the approach in Example \ref{block of 4} we have found a set of ten generators of
the $K[X_6]^{\delta}$-module $(L_6'/L_6'')^{\delta}$:
\[
c_1=[x_3,x_1],\quad c_2=[x_5,x_1],\quad c_3=[x_5,x_3],
\]
\[
c_4=[x_2,x_1],\quad c_5=[x_4,x_3],\quad c_6=[x_6,x_5],
\]
\[
c_7=[x_4,x_1]-[x_3,x_2],\quad c_8=[x_6,x_1]-[x_5,x_2],\quad c_9=[x_6,x_3]-[x_5,x_4],
\]
\[
c_{10}=[x_3,x_2,x_5]-[x_5,x_2,x_3]-[x_4,x_1,x_5]+[x_5,x_1,x_4]
\]
and 21 relations between them:
\[
R_1(3,0): c_3f_1=-c_1f_3+c_2f_2,
\]
\[
R_2(3,1): c_1f_4=-c_4f_2^2-c_5f_1^2+c_7f_1f_2,
\]
\[
R_3(3,1): c_1f_5=-c_2f_4-2c_4f_2f_3+c_7f_1f_3+c_8f_1f_2-c_9f_1^2,
\]
\[
R_4(3,1): c_3f_4=-c_1f_6+2c_5f_1f_3-c_7f_2f_3+c_8f_2^2-c_9f_1f_2,
\]
\[
R_5(3,1): c_3f_5=-c_2f_6-2c_6f_1f_2-c_7f_3^2+c_8f_2f_3+c_9f_1f_3,
\]
\[
R_6(3,1): c_3f_6=-c_5f_3^2-c_6f_2^2+c_9f_2f_3,
\]
\[
R_7(3,1): c_2f_5=-c_4f_3^2-c_6f_1^2+c_8f_1f_3,
\]
\[
R_8(3,1): c_{10}f_1=-c_1f_5-c_4f_2f_3+c_8f_1f_2-c_9f_1^2,
\]
\[
R_9(3,1): c_{10}f_2=-c_1f_6+c_5f_1f_3-c_7f_2f_3+c_8f_2^2-c_9f_1f_2,
\]
\[
R_{10}(3,1): c_{10}f_3=-c_2f_6-c_6f_1f_2-c_7f_3^2+c_8f_2f_3,
\]
\[
R_{11}(4,1): c_1f_1f_6=-c_2f_2f_4-c_4f_2^2f_3+c_5f_1^2f_3+c_8f_1f_2^2-c_9f_1^2f_2,
\]
\[
R_{12}(4,1): c_1f_3f_6=c_2f_2f_6+c_5f_1f_3^2+c_6f_1f_2^2-c_9f_1f_2f_3,
\]
\[
R_{13}(4,1): c_2f_1f_6=-c_2f_3f_4-c_4f_2f_3^2-c_6f_1^2f_2+c_8f_1f_2f_3,
\]
\[
R_{14}(3,2): c_{10}f_4=c_4f_2f_6+c_5f_1f_5-c_7(f_1f_6+f_3f_4)+c_8f_2f_4-c_9f_1f_4,
\]
\[
R_{15}(3,2): c_{10}f_5=c_4f_3f_6-c_6f_1f_4-c_7f_3f_5+c_8f_3f_4,
\]
\[
R_{16}(3,2): c_{10}f_6=-c_5f_3f_5-c_6f_2f_4+c_9f_3f_4,
\]
\[
R_{17}(4,2): c_1f_6^2=c_5f_3(f_3f_4+2f_1f_6)+c_6f_2^2f_4-c_7f_2f_3f_6+c_8f_2^2f_6-c_9f_2(f_3f_4+f_1f_6),
\]
\[
R_{18}(4,2): c_2f_4^2=c_4f_2(-f_3f_4+f_1f_6)+c_5f_1^2f_5-c_7f_1^2f_6+c_8f_1f_2f_4-c_9f_1^2f_4,
\]
\[
R_{19}(4,2): c_2f_4f_6=-c_4f_2f_3f_6-c_5f_1f_3f_5-c_6f_1f_2f_4+c_7f_1f_3f_6+c_9f_1f_3f_4,
\]
\[
R_{20}(4,2): c_2f_6^2=c_5f_3^2f_5+c_6f_2(f_3f_4-f_1f_6)-c_7f_3^2f_6+c_8f_2f_3f_6-c_9f_3^2f_4,
\]
\[
R_{21}(3,3): c_4f_6^2=-c_5f_5^2-c_6f_4^2+c_7f_5f_6-c_8f_4f_6+c_9f_4f_5.
\]
Hence the $K[X_6]^{\delta}$-module generated by $\{c_1,\ldots,c_{10}\}$ is spanned by
\[
c_1f_1^{q_1}f_2^{q_2}f_3^{q_3}, c_1f_2^{q_2}f_6;\quad c_2f_1^{q_1}f_2^{q_2}f_3^{q_3}f_4^{\varepsilon},\quad c_2f_2^{q_2}f_3^{q_3}f_6;
\]
\[
c_3f_2^{q_2}f_3^{q_3};\quad c_4f_1^{q_1}f_2^{q_2}f_3^{q_3}f_4^{q_4}f_6^{\varepsilon}, \quad c_4f_1^{q_1}f_3^{q_3}f_4^{q_4}f_5^{q_5+1}f_6^{\varepsilon};
\]
\[
c_jf_1^{q_1}f_2^{q_2}f_3^{q_3}f_4^{q_4}f_6^{q_6}, \quad c_jf_1^{q_1}f_3^{q_3}f_4^{q_4}f_5^{q_5+1}f_6^{q_6}; \quad c_{10},
\]
where $q_i\geq 0$, $i=1,\ldots,6$, $j=5,6,7,8,9$, $\varepsilon =0,1$.
The generating function of this set is equal to the Hilbert series $H_{GL_2}((L_6'/L_6'')^{\delta},t_1,t_2,z)$.
Hence, as in the other examples in this section, it is sufficient to show that the set consists of linearly independent elements.
Let
\[
\sum_{j=1}^{10}c_ju_j=0,
\]
where $u_j$ are polynomials in $f_1,\ldots,f_6$ of the form
\[
u_1=u_1'(f_1,f_2,f_3)+u_1''(f_2)f_6,
\]
\[
u_2=u_2'(f_1,f_2,f_3)+u_2''(f_1,f_2,f_3)f_4+u_2'''(f_2,f_3)f_6,
\]
\[
u_3=u_3(f_2,f_3),
\]
\[
u_4=u_4'(f_1,f_2,f_3,f_4)+u_4''(f_1,f_2,f_3,f_4)f_6
\]
\[
+u_4'''(f_1,f_3,f_4,f_5)f_5+u_4^{(iv)}(f_1,f_3,f_4,f_5)f_5f_6,
\]
\[
u_j=u_j'(f_1,f_2,f_3,f_4,f_6)+u_j''(f_1,f_3,f_4,f_5,f_6)f_5,\quad j=5,6,7,8,9,
\]
\[
u_{10}=\text{const}.
\]
Clearly, we may assume that the linear dependence $\displaystyle \sum_{j=1}^{10}c_ju_j=0$ is homogeneous. Since there is no linear dependence
of degree 3, we conclude that $u_{10}=0$. As in the previous examples, we shall work in the abelian wreath product $A_6\text{wr}B_6$.
As in Example \ref{block of 4}
we shall denote by $v_i$ the coordinate of $a_i$ of $v\in A_6\text{wr}B_6$.
The six coordinates $v_i$ of
\[
v=\sum_{j=1}^9c_ju_j=\sum_{i=1}^6a_iv_i=0
\]
define a linear homogeneous system
\[
v_i=0,\quad i=1,\ldots,6,
\]
with unknowns $u_1,\ldots,u_9$ and with a matrix
\[
\left(\begin{array}{rrrrrrrrr}
x_3&-x_5&0&-x_2&0&0&-x_4&-x_6&0\\
0&0&0&x_1&0&0&x_3&x_5&0\\
x_1&0&-x_5&0&-x_4&0&-x_2&0&-x_6\\
0&0&0&0&x_3&0&x_1&0&x_5\\
0&x_1&x_3&0&0&-x_6&0&-x_2&-x_4\\
0&0&0&0&0&x_5&0&x_1&x_3\\
\end{array}\right).
\]
We solve the system by the Gauss method keeping the entries of the matrix in $K[X_6]$.
Since $v\in L_6'/L_6''$ and $\displaystyle \sum_{i=1}^6x_iv_i=0$, we can remove the first row
of the matrix. Then we bring the matrix in a triangular form
\[
\left(\begin{array}{rrrrrrrrr}
x_1&0&-x_5&0&-x_4&0&-x_2&0&-x_6\\
0&x_1&x_3&0&0&-x_6&0&-x_2&-x_4\\
0&0&0&x_1&0&0&x_3&x_5&0\\
0&0&0&0&x_3&0&x_1&0&x_5\\
0&0&0&0&0&x_5&0&x_1&x_3\\
\end{array}\right).
\]
We multiply the first row by $x_3$ and add to it the fourth row multiplied by $x_4$.
Similarly we multiply the second row by $x_5$ and add the fifth row multiplied by $x_6$:
\[
\left(\begin{matrix}
x_1x_3&0&-x_3x_5&0&0&0&x_1x_4-x_2x_3&0&-(x_3x_6-x_4x_5)\\
0&x_1x_5&x_3x_5&0&0&0&0&x_1x_6-x_2x_5&x_3x_6-x_4x_5\\
0&0&0&x_1&0&0&x_3&x_5&0\\
0&0&0&0&x_3&0&x_1&0&x_5\\
0&0&0&0&0&x_5&0&x_1&x_3\\
\end{matrix}\right).
\]
The second row of the matrix gives the equation
\[
x_1x_5u_2+x_3x_5u_3+(x_1x_6-x_2x_5)u_8+(x_3x_6-x_4x_5)u_9=0.
\]
Since $u_3$ depends on $x_3,x_5$ only and the monomials of all other summands depend also on the other variables, we conclude
that $u_3=0$. Let $w_j$ be the component of $u_j$ which does not depend on $x_2,x_4,x_6$, $j=7,8,9$. Since $u_1,u_2$ depend
linearly on $x_2,x_4,x_6$, the first two rows of the matrix give the system
\[
x_1x_3(u_1'+(x_3x_6-x_4x_5)u_1'')+(x_1x_4-x_2x_3)w_7-(x_3x_6-x_4x_5)w_9=0
\]
\[
x_1x_5(u_2'+(x_1x_4-x_2x_3)u_2''+(x_3x_6-x_4x_5)u_2''')+(x_1x_6-x_2x_5)w_8+(x_3x_6-x_4x_5)w_9=0.
\]
Since $u_1',u_1'',u_2',u_2'',u_2''',w_7,w_8,w_9$ do not depend on $x_2,x_4,x_6$, we derive that $u_1'=u_2'=0$.
We rewrite the system in the form
\[
-x_3w_7x_2+x_5(-x_1x_3u_1''+w_9)x_4+x_3(x_1x_3u_1''-w_9)x_6=0
\]
\[
-x_5(x_1x_3u_2''+w_8)x_2+x_5(x_1^2u_2''-x_1x_5u_2'''-w_9)x_4+(x_1x_3x_5u_2'''+x_1w_8+x_3w_9)x_6=0
\]
which implies
\[
w_7=0,\quad w_8=-x_1x_3u_2'',\quad w_9=x_1x_3u_1'',\quad x_3u_1''-x_1u_2''+x_5u_2'''=0.
\]
The latter equation gives that every monomial of $u_2''$ depends on $x_1$ or $x_5$ which is impossible because $u_1''=u_1''(x_3)$.
Hence $u_1''=0$, $-x_1u_2''+x_5u_2'''=0$, and $u_2'''$ depends on $x_1$ which is also impossible.
Again, $u_2''=u_2'''=0$. Now the matrix of the system with unknowns $u_4,\ldots,u_9$ becomes
\[
\left(\begin{array}{rrrrrr}
x_1&0&0&x_3&x_5&0\\
0&x_3&0&x_1&0&x_5\\
0&0&x_5&0&x_1&x_3\\
0&0&0&x_1x_4-x_2x_3&0&-(x_3x_6-x_4x_5)\\
0&0&0&0&x_1x_6-x_2x_5&x_3x_6-x_4x_5\\
\end{array}\right)
\]
and the solution of the system is
\[
u_4=f_6^2v,\quad u_5=f_5^2v,\quad u_6=f_4^2v,
\]
\[
u_7=-f_5f_6v,\quad u_8=f_4f_6v,\quad u_9=-f_4f_5,
\]
$v\in K[X_6]^{\delta}$. Hence $f_6^2$ divides
\[
u_4=u_4'(f_1,f_2,f_3,f_4)+u_4''(f_1,f_2,f_3,f_4)f_6
\]
\[
+u_4'''(f_1,f_3,f_4,f_5)f_5+u_4^{(iv)}(f_1,f_3,f_4,f_5)f_5f_6
\]
and therefore $f_6$ divides $u_4'+u_4'''f_5$. If we order the variables by
$x_6>x_4>x_5>x_1>x_1>x_3$, then the leading monomial of $u_4'+u_4'''$ with respect to the lexicographical
order is the leading monomial of $u_4'''(x_1,x_5,x_1x_4,x_1x_6)x_1x_6$ which cannot be divisible by
$f_6$ with leading monomial $x_3x_6$.
Hence $u_4'+u_4'''f_5=0$. Again, $f_6$ does not divide $u_4''+u_4^{(iv)}f_5$ and, as a result, $f_6^2$ cannot divide $u_4$. Hence $v=0$
and this completes the proof.
\end{example}

\section*{Acknowledgements}

The third named author is very thankful to the Institute of Mathematics and Informatics of
the Bulgarian Academy of Sciences for the creative atmosphere and the warm hospitality during his visit
as a post-doctoral fellow when this project was carried out.


\begin{thebibliography}{99}

\bibitem{ADF}
G. Almkvist, W. Dicks, E. Formanek,
Hilbert series of fixed free algebras and noncommutative classical
invariant theory,
J. Algebra {\bf 93} (1985), 189-214.

\bibitem{B}
Yu. A. Bahturin,
Identical Relations in Lie Algebras (Russian),
Nauka, Moscow, 1985.
Translation: VNU Science Press, Utrecht, 1987.

\bibitem{Bed}
L. Bedratyuk,
A note about the Nowicki conjecture on Weitzenbock derivations,
Serdica Math. J. {\bf 35} (2009), 311-316.

\bibitem{Bel}
A.Ya. Belov,
Rationality of Hilbert series of relatively free algebras,
Uspekhi Mat. Nauk {\bf 52} (1997), No. 2, 153-154.
Translation: Russian Math. Surveys {\bf 52} (1997), 394-395.

\bibitem{BBD}
F. Benanti, S. Boumova, V. Drensky, G.K. Genov, P. Koev,
Computing with rational symmetric functions and applications to
invariant theory and PI-algebras,
Serdica Math. J. {\bf 38} (2012), 137-188.
http://www.math.bas.bg/serdica/2012/2012-137-188.pdf.

\bibitem{B1}
A. Berele,
Applications of Belov's theorem to the cocharacter
sequence of p.i. algebras,
J. Algebra {\bf 298} (2006), 208-214.

\bibitem{B2}
A. Berele,
Properties of hook Schur functions with applications to p. i. algebras,
Adv. Appl. Math. {\bf 41} (2008), 52-75.

\bibitem{DEP}
C. De Concini, D. Eisenbud, C. Procesi,
Young diagrams and determinantal varieties,
Invent. Math. {\bf 56} (1980), 129-165.

\bibitem{DK}
H. Derksen, G. Kemper,
Computational Invariant Theory,
Encyclopaedia of Mathematical Sciences,
Invariant Theory and Algebraic Transformation Groups {\bf 130},
Springer-Verlag, Berlin, 2002.

\bibitem{D0}
V. Drensky,
Fixed algebras of residually nilpotent Lie algebras,
Proc. Amer. Math. Soc. {\bf 120} (1994), 1021-1028.

\bibitem{D1}
V. Drensky,
Free Algebras and PI-Algebras,
Springer-Verlag, Singapore, 1999.

\bibitem{D2}
V. Drensky,
Invariants of unipotent transformations
acting on noetherian relatively free algebras,
Serdica Math. J. {\bf 30} (2004), 395-404.
http://www.math.bas.bg/serdica/2004/2004-395-404.pdf.

\bibitem{DGe}
V. Drensky, G.K. Genov,
Multiplicities of Schur functions with applications to invariant theory
and PI-algebras,
C.R. Acad. Bulg. Sci. {\bf 57} (2004), No. 3, 5-10.

\bibitem{DG}
V. Drensky, C.K. Gupta,
Constants of Weitzenb\"ock derivations and invariants of unipotent
transformations acting on relatively free algebras,
J. Algebra {\bf 292} (2005), 393-428.

\bibitem{DML}
V. Drensky, L. Makar-Limanov,
The conjecture of Nowicki on Weitzenb\"ock derivations of polynomial algebras,
J. Algebra Appl. {\bf 8} (2009), 41-51.

\bibitem{E}
E.B. Elliott,
On linear homogeneous diophantine equations,
Quart. J. Pure Appl. Math. {\bf 34} (1903), 348-377.

\bibitem{K1}
J. Khoury,
Locally Nilpotent Derivations and Their Rings of Constants,
Ph.D. Thesis, Univ. Ottawa, 2004.

\bibitem{K2}
J. Khoury,
A Groebner basis approach to solve a conjecture of Nowicki,
J. Symbolic Comput. {\bf 43} (2008), 908-922.

\bibitem{Kr}
K. Kurano,
Positive characteristic finite generation of symbolic Rees algebras and Roberts' counterexamples
to the fourteenth problem of Hilbert,
Tokyo J. Math. {\bf 16} (1993), 473-496.

\bibitem{Ku}
S. Kuroda,
A simple proof of Nowicki's conjecture on the kernel of an elementary derivation,
Tokyo J. Math. {\bf 32} (2009), 247-251.

\bibitem{Mc}
P.A. MacMahon,
Combinatory Analysis, vols. 1 and 2,
Cambridge Univ. Press. 1915, 1916.
Reprinted in one volume:
Chelsea, New York, 1960.

\bibitem{N}
A. Nowicki,  Polynomial Derivations and Their Rings of Constants,
Uniwersytet Mikolaja Kopernika, Torun, 1994.
www-users.mat.umk.pl/\~{}anow/ps-dvi/pol-der.pdf.

\bibitem{R}
P. Roberts,
An infinitely generated symbolic blow-up in a power series ring and a new counterexample to Hilbert's fourteenth problem,
J. Algebra {\bf 132} (1990), 461-473.

\bibitem{Sh}
A.L. Shmel'kin,
Wreath products of Lie algebras and
their application in the theory of groups (Russian),
Trudy Moskov. Mat. Obshch. {\bf 29} (1973), 247-260.
Translation: Trans. Moscow Math. Soc. {\bf 29} (1973), 239-252.

\bibitem{St}
B. Sturmfels,
Algorithms in Invariant Theory. 2nd ed.
Texts and Monographs in Symbolic Computation, Springer-Verlag, Wien, 2008.

\bibitem{W}
R. Weitzenb\"ock,
\"Uber die Invarianten von linearen Gruppen,
Acta Math. {\bf 58} (1932), 231-293.

\bibitem{X}
G. Xin,
A fast algorithm for MacMahon's partition analysis,
Electron. J. Comb. {\bf 11} (2004), No. 1, Research paper R58.

\end{thebibliography}
\end{document}